\documentclass[11pt]{extarticle}
\def\version{December 30, 2024}

\nonstopmode

\usepackage[normalem]{ulem}

\newcommand{\ac}[1]{\noindent\textcolor{red}
{{\rm [\![}\mbox{\sc{AC}$\blacktriangleright\!\!\blacktriangleright$}: {#1}{\rm ]\!]}}}

\newcommand{\nb}[1]{\noindent\textcolor{blue}
{{\rm [\![}\mbox{\sc{NB}}: {#1}{\rm ]\!]}}}

\renewcommand{\ac}[1]{}
\renewcommand{\nb}[1]{}

\newcommand{\e}{\bm{e}}

\newcommand{\ds}{\displaystyle}

\usepackage{comment}
\usepackage{color}
\definecolor{MyDarkBlue}{rgb}{0,0.08,0.45}
\usepackage[backref=none]{hyperref}
\hypersetup{pdfborder={0 0 0},
  colorlinks,
  urlcolor={MyDarkBlue},
  linkcolor={MyDarkBlue},
  citecolor={MyDarkBlue},
  breaklinks=true}
\usepackage{doi}
\providecommand{\eprint}[1]{}
\renewcommand{\eprint}[1]{arXiv:\href{http://arxiv.org/abs/#1}{#1}}

\usepackage{color}
\usepackage{latexsym}
\usepackage{bm}
\usepackage{upgreek}

\usepackage{mathrsfs}
\usepackage{times}
\usepackage{amsmath}
\usepackage{amsthm}
\usepackage{amssymb}
\usepackage{epsfig}
\usepackage{graphicx}

\newcommand{\jj}{\mathrm{i}}
\providecommand{\itref}[1]{{\it (\ref{#1})}}

\textheight 23.5cm
\oddsidemargin -0mm
\evensidemargin -0mm
\topmargin -1.8cm
\textwidth 6.5in


\DeclareOldFontCommand{\brianup}{\upshape}{\mathrm}


\DeclareSymbolFont{EUR}{U}{eur}{m}{n}
\SetSymbolFont{EUR}{bold}{U}{eur}{b}{n}
\DeclareSymbolFontAlphabet{\eur}{EUR}

\DeclareSymbolFont{EUB}{U}{eur}{b}{n}
\SetSymbolFont{EUB}{bold}{U}{eur}{b}{n}
\DeclareSymbolFontAlphabet{\eub}{EUB}



\DeclareSymbolFont{AMSb}{U}{msb}{m}{n}
\DeclareSymbolFontAlphabet{\mathbb}{AMSb}

\newcommand\scrX{\mathscr{X}}
\newcommand\scrY{\mathscr{Y}}
\newcommand\frakX{\mathfrak{X}}
\newcommand\frakY{\mathfrak{Y}}

\newcommand{\hh}{\mbox{\sl h}}

\newcommand\calA{\mathcal{A}}

\newcommand\calL{\mathcal{L}}

\newcommand{\notyet}[1]{{}}

\newcommand{\dom}{\mathfrak{D}}

\newcommand{\p}{\partial}
\renewcommand{\P}{\grave{\partial}}

\newcommand{\at}[1]{\vert\sb{\sb{#1}}}

\def\R{\mathbb{R}}
\newcommand{\C}{\mathbb{C}}\newcommand{\Z}{\mathbb{Z}}
\newcommand{\N}{\mathbb{N}}

\newcommand{\abs}[1]{\vert #1 \vert}

\newcommand{\sothat}{{\rm ;}\ }

\DeclareMathSymbol{\varGamma}{\mathord}{letters}{"00}
\DeclareMathSymbol{\varDelta}{\mathord}{letters}{"01}
\DeclareMathSymbol{\varTheta}{\mathord}{letters}{"02}
\DeclareMathSymbol{\varLambda}{\mathord}{letters}{"03}
\DeclareMathSymbol{\varXi}{\mathord}{letters}{"04}
\DeclareMathSymbol{\varPi}{\mathord}{letters}{"05}
\DeclareMathSymbol{\varSigma}{\mathord}{letters}{"06}
\DeclareMathSymbol{\varUpsilon}{\mathord}{letters}{"07}
\DeclareMathSymbol{\varPhi}{\mathord}{letters}{"08}
\DeclareMathSymbol{\varPsi}{\mathord}{letters}{"09}
\DeclareMathSymbol{\varOmega}{\mathord}{letters}{"0A}

\theoremstyle{plain}
\newtheorem{theorem}{Theorem}[section]

\newtheorem{lemma}{Lemma}[section]

\theoremstyle{definition}


\theoremstyle{remark}
\newtheorem{remark}[lemma]{Remark}
\newtheorem{example}[lemma]{Example}

\renewcommand{\theequation}{\thesection.\arabic{equation}}

\makeatletter\@addtoreset{equation}{section}
\makeatletter\@addtoreset{theorem}{section}
\makeatother

\renewcommand{\Re}{\mathop{\rm{R\hskip -1pt e}}\nolimits}
\renewcommand{\Im}{\mathop{\rm{I\hskip -1pt m}}\nolimits}
\newcommand{\bfH}{\mathbf{H}}

\newcommand{\bfX}{\mathbf{X}}
\newcommand{\bmK}{\bm{K}}

\begin{document}
\renewcommand{\theequation}{\thesection.\arabic{equation}}
\newcommand{\sect}[1]{\setcounter{equation}{0}\section{#1}}

\title{On spectral stability
of one- and bi-frequency solitary waves in Soler model in (3+1)D
}

\author{
{\sc Nabile Boussa{\"\i}d}
\\
{\small\it
Universit\'e Marie et Louis Pasteur, CNRS, Institut UTINAM,
\'{e}quipe de physique th\'{e}orique},
\\
{\small\it
F-25000 Besan\c{c}on, France
}
\\~\\
{\sc Andrew Comech}
\\
{\small\it
Mathematics Department,
Texas A\&M University, College Station, TX 77843-3368, USA}
\\~\\
{\sc Niranjana Kulkarni}
\\
{\small\it
Mathematics Department,
Texas A\&M University, College Station, TX 77843-3368, USA}
}

\date{\version}

\maketitle

\begin{abstract}
For the nonlinear Dirac equation with scalar self-interaction (the Soler model) in three spatial dimensions,
we consider the linearization at solitary wave solutions and find the invariant spaces
which  correspond to different spherical harmonics,
thus achieving the radial reduction of the spectral stability analysis.
We apply the same technique to the bi-frequency solitary
waves (which are generically present in the Soler model)
and show that they can also possess linear stability properties
similar to those of one-frequency solitary waves.

\end{abstract}

\section{Introduction}

Properties of fermionic fields
are still somewhat of a mystery.
Pre-history of spinors starts only in the 19th century
(if we do not take into account
the figures from the candle dance of Southeast Asia):
it could be traced back to
William Hamilton's research on quaternions (1843).
Irreducible even-dimensional representations
of $\mathbf{SO}(3)$ were studied by
\'{E}lie Cartan \cite{cartan1913groupes}.
Two-component vectors
-- points of the space corresponding to the
irreducible representation
of $\mathbf{SO}(3)$ in $\C^2$ --
appeared in Wolfgang Pauli's theory
\cite{pauli1927quantenmechanik}
and were subsequently called
\emph{spinors} by Paul Ehrenfest
and studied in van der Waerden's
\emph{Spinoranalyse}
\cite{van-der-waerden1929spinoranalyse,van-der-waerden-1932}.
Four-component spinors
are at the foundation of the relativistic theory
of Paul Dirac \cite{dirac1928};
this theory led to correct values for the spin
and the magnetic moment of the electron
and subsequently
to the most accurate agreement of the spectrum of the Hydrogen
and the eigenvalues of the Dirac operator
in the Coulomb potential
(with the notable exception of the Lamb shift),
suggesting that the electrons are
to be described by the field
that takes values in the four-dimensional ``spinor''
space.\footnote{So nice an agreement is a delicate point:
the exact formula for the energy levels in the Dirac theory
was first obtained by Sommerfeld
in \cite{sommerfeld1916quantentheorie},
twelve years before Dirac,
via applying
a completely empirical Bohr--Sommerfeld quantization
taking into account the relativistic precession of
elliptic orbits
known from Astronomy.}

While it is said colloquially that
spinors ``have no classical analogues'',
some geometrical properties of our three-dimensional world
could be related to spinorial representations
(or, more precisely, to the fact that the spinor group
$\mathbf{Spin}(3)$
is a simply connected double cover of $\mathbf{SO}(3)$,
which in turn is connected but not simply connected).
These phenomena are closely related
to the idea of the M\"{o}bious strip,
such as
 the change of polarization of light
 when moving along a closed contour
 in a medium with a small diffraction index,
 when the polarization plane rotates by $\pi$
 (see e.g. \cite{monastyrskij1999ignatovsky}),
the ``Berry phase'' \cite{berry1984quantal} (also known as
``Pancharatnam--Berry phase'' or ``geometric phase'')
of an eigenvector corresponding to a simple eigenvalue,
which
can be $\pi n$
(with $n$ not necessarily even)
after the motion over a closed non-contractible contour,
or Dirac's string trick
(related to the candle-dance trick).

The Dirac theory of spinor fields
-- and the Atomic Physics in general --
still seems incomplete,
as Dirac himself was pointing out
in early eighties \cite{dirac1981does}.
In particular,
using the Dirac equation for more than one electron
is still problematic.
The problems appearing in the description of
two electrons in Helium
based on the Dirac equation are discussed in
\cite{kalf1981nonexistence},
\cite{derezinski2012open}
(the discussion goes as far as to conjecture
that the electrons in Helium are not in a bound state
but rather that they form a \emph{resonance}),
and then in \cite{okaji2014spectral}.
Let us also mention an attempt \cite{kiessling2016novel}
to interpret the Dirac field
as describing a more fundamental particle,
with electron and positron being
two of its different ``topological spin'' states;
see also the discussion in e.g. \cite{bialynicki2020comment}
on the presence of positronic component in the solution to the Dirac equation.
As the matter of fact, even our understanding
of a single electron seems far from complete.
Can we hope to have as its counterpart a localized solution
of the Dirac--Maxwell system~\cite{gross1966cauchy,wakano-1966}
in the framework of classical, non-quantized, interacting fields
-- are there stable localized solutions?
The quantum theory essentially operates with the plane waves
whose  self-interaction is dropped: physically, one says that the
renormalization has been performed,
so that the ``quanta'' of the fermionic field
have the mass and the charge known from the experiment.
While mathematics had nothing to offer to the physics demand
when the quantum electrodynamics was being developed,
now the mathematical background
for the treatment of the Dirac--Maxwell system of classical fields
is being formed:
the local well-posedness of Dirac--Maxwell
was proved in~\cite{bournaveas1996local};
the existence of standing waves
was demonstrated numerically in \cite{lisi1995solitary}
and proved in \cite{esteban1996stationary,abenda1998solitary}.
The question
of stability of localized classical solutions
of Dirac--Maxwell
(and thus of its possible relation to Physics)
can finally be consistently stated and discussed.

According to the physics folklore,
since the Dirac energy density
is not positive-definite,
even the zero solution
(``vacuum'')
of the Dirac--Maxwell cannot be stable
due to the continuous creation and annihilation
of electron-positron pairs;
the way to a stable electron is via the second quantization,
with all negative energy states of the ``Dirac sea'' having been filled
and with the Pauli exclusion principle
preventing the birth of more negative-energy electrons.
On the other hand,
owing to the Dirac equation being the first order in time,
the physical intuition based on Newton's second law (which is second order in time)
is not necessarily applicable;
see, e.g., \cite[Section V]{ranada1983classical}.
Indeed, using the cubic nonlinear Dirac equation \cite{jetp.8.260,PhysRevD.1.2766}
to model the self-interaction of the the Dirac--Maxwell system,
one finds that solitary waves
$\phi_\omega(x)e^{-\jj\omega t}$
in the limit nonrelativistic limit $\omega\lesssim m$,
with $m$ the electron's mass,
in spatial dimension one and two are spectrally stable
\cite{berkolaiko2012spectral,lakoba2018numerical,linear-b}.
According to \cite{PhysRevLett.116.214101},
solitary waves of the cubic Dirac equation in (3+1)D,
while spectrally unstable for $\omega\lesssim m$ close to $m$,
seem to become spectrally stable for $\omega\lesssim\omega_\ast$,
with $\omega_\ast\approx 0.936\,m$.
It is at this value of $\omega$
that the pair of real eigenvalues
-- one positive (the one responsible for
linear instability) and one negative,
present in the spectrum of linearization operator
for
$\omega_\ast<\omega<m$ -- collide
at the origin, producing two purely imaginary eigenvalues.
Computations of the spectrum of the linearization at solitary waves
have been performed in \cite{PhysRevLett.116.214101} only in the radial case;
no approach to consider higher degree spherical harmonics has been developed.
We are going to overcome this situation in the present work,
paving the way for the future numeric computation
of the spectrum of the linearization at solitary waves
in the Soler model in (3+1)D for perturbations of different angular structure
and hoping that this approach would also be
applicable to the physically viable Dirac--Maxwell.
Let us mention that consideration of perturbations
which are
of the same angular structure as the soliton itself
suggests -- but not proves --
the stability of solitons of Dirac--Maxwell
in the nonrelativistic limit
$\omega\gtrsim -m$;
see \cite{comech2013polarons,comech2018small}.

\smallskip

\noindent
{\bf Bi-frequency solitary waves.\ }
One peculiarity of the Soler model is the existence
of bi-frequency solitary waves,
which is a consequence of the $\mathbf{SU}(1,1)$ symmetry
of the Soler model:
\begin{align}\label{bo}
\psi\mapsto (a+b\gamma^2\bmK)\psi,
\qquad
\gamma^2=
\begin{bmatrix}0&\sigma_2\\-\sigma_2&0\end{bmatrix},
\qquad
a,\,b\in\C,
\quad
\abs{a}^2-\abs{b}^2=1,
\end{align}
where $\bmK:\,\C^4\to\C^4$ is the operator of complex conjugation.
We note that \eqref{bo} contains $\mathbf{U}(1)$ as its subgroup
(with $b=0$);
the generator $\jj\gamma^2\bmK$
is known as the charge conjugation
in Quantum Electrodynamics.
The symmetry transformation \eqref{bo}
coincides with the Bogoliubov transformation
\cite{bogolyubov1958}
from the solid state physics.
It seems that
the presence of this continuous symmetry
in the context of the Dirac equation
was first noticed in \cite{galindo1977remarkable}.
One can see that
the application of the Bogoliubov symmetry
\eqref{bo}
to a solitary wave $\phi_\omega(x)e^{-\jj\omega t}$
produces bi-frequency solitary waves
\cite{boussaid2018spectral}.

\begin{remark}
We point out that
while the Soler model seems to be merely a playground
for developing tools for the Dirac--Maxwell system,
with the symmetry \eqref{bo} nothing but a curious artifact
(which is absent in Dirac--Maxwell),
a physically viable Dirac--Klein--Gordon system
describing interaction or self-interaction of fermions
via the exchange of scalar particles
(``Yukawa coupling''),
with the Higgs boson an example of such a scalar particle,
features the same $\mathbf{SU}(1,1)$ symmetry,
and so does any spinor theory
with the interaction term
in the Hamiltonian density based on
the scalar quantity $\psi^*\beta\psi$.
The consequences of $\mathbf{SU}(1,1)$ symmetry,
such as the existence of bi-frequency solitary waves
and the conservation of a complex-valued charge
$\Lambda=\int_{\R^3}
\psi^*(t,x)(-\jj\gamma^2)\bmK\psi(t,x)\,dx$
(see \cite{boussaid2018spectral}),
remain in effect for all such theories.

It seems tempting yet still too early
to try to see a relation of bi-frequency localized solutions
in fermionic models with scalar self-interaction
to actual physical phenomena,
since the effects that may be connected to the
interaction via the Higgs mechanism
-- such as neuton--antineutron oscillations
\cite{phillips2016neutron}
or neutron--mirror neutron (the latter is considered
a hidden sector particle, related to the Dark Matter)
\cite{kamyshkov2022neutron,dvali2024kaluza}
--
are presently under active study in High Energy Physics
which tries to evolve beyond the Standard Model.
Other localized solutions -- or quasiparticles --
in the field theories
include polarons
(see \cite{alexandrov2008polarons,franchini2021polarons})
and skyrmions (see the reviews \cite{zahed1986skyrme,everschor-sitte2018perspective});
both have been found to be directly related to
different physical phenomena.
\end{remark}

If a particular bi-frequency solitary
wave can be obtained by the application of
$\mathbf{SU}(1,1)$ transformation
to a one-frequency solitary wave,
then its spectral stability --
understood in a proper way --
would be the same as that
of one-frequency solitary waves
(take a bi-frequency solitary wave with a small perturbation;
apply the inverse transformation to obtain a one-frequency solitary wave with a small perturbation).
This approach does not work in spatial dimensions three and higher,
since the solitary manifold of bi-frequency solitary waves
is \emph{larger} than the symmetry group $\mathbf{SU}(1,1)$
\cite{boussaid2018spectral}.
For example, in three spatial dimensions,
while
the solitary manifold of one-frequency solitary waves
\begin{align}\label{s1}
\mathscr{S}_1=\left\{
\begin{bmatrix}
v(r,\omega)\bm\xi\\\jj u(r,\omega)\sigma_r\bm\xi
\end{bmatrix}e^{-\jj\omega t},
\quad
\bm\xi\in\C^2,
\ \abs{\bm\xi}=1
\right\},
\end{align}
with $v(r,\omega),\,u(r,\omega)$ real-valued
and $\sigma_r=\abs{x}^{-1}x\cdot\sigma$,
is of (real) dimension $4$ (we consider $\omega$ as a parameter);
the group $\mathbf{SU}(1,1)$ is of dimension $3$,
sharing the one-dimensional $\mathbf{U}(1)$-symmetry with
\eqref{s1};
as a result, the set
$\big\{(a+b\gamma^2\bmK)\phi_\omega(x)e^{-\jj\omega t}\big\}$
is six-dimensional.
At the same time,
the dimension of the solitary manifold
of bi-frequency solitary waves,
\begin{align}\label{s2}
\mathscr{S}_2
=\left\{
\begin{bmatrix}v(r,\omega)\bm\xi\\\jj u(r,\omega)\sigma_r\bm\xi
\end{bmatrix}e^{-\jj\omega t}
+
\begin{bmatrix}-\jj u(r,\omega)\sigma_r\bm\eta\\v(r,\omega)\bm\eta
\end{bmatrix}e^{\jj\omega t},
\quad
\bm\xi,\,\bm\eta\in\C^2,
\ \abs{\bm\xi}^2-\abs{\bm\eta}^2=1
\right\},
\end{align}
equals eight.
Bi-frequency solitary waves that
can be obtained from one-frequency solitary waves with a symmetry transformation
\eqref{bo} are the ones with $\bm\eta$ parallel to $\sigma_2\bmK\bm\xi$.
We note that $\bm\xi$ and $\sigma_2\bmK\bm\xi$ are mutually orthogonal;
thus, the solitary waves
that cannot be obtained as an element of
$\mathbf{SU}(1,1)$ acting on a one-frequency solitary wave
are the ones with $\bm\xi$, $\bm\eta$ being non-orthogonal.
(We call solutions
of the form \eqref{s2}
\emph{bi-frequency}
rather than \emph{two-frequency} solitary waves
to emphasize the fact that
the frequencies $\pm\omega$ are mutually opposite.)

In four spatial dimensions,
the Soler model for four-component spinors
has $\mathbf{U}(1)$ symmetry
but not the $\mathbf{SU}(1,1)$ symmetry
as a result,
none of the bi-frequency solitary waves
\eqref{s2} with $\bm\eta\ne 0$
admits the reduction of stability analysis
to that of a one-frequency solitary wave.
At the same time,
a bi-frequency solitary wave from \eqref{s2},
with $\abs{\bm\xi}\gtrsim 1$ and $\bm\eta$ small,
can be considered as a small perturbation
of a one-frequency solitary wave from \eqref{s1};
this means that the asymptotic stability for one-frequency solitary waves
is simply not possible: a small perturbation,
in the form of the bi-frequency solitary wave,
being an exact solution, would never relax to a one-frequency solitary wave.
This brings us to the need to perform the stability analysis
of bi-frequency solitary waves directly, including the cases
when the reduction to a one-frequency solitary wave is not possible.
An attempt at this stability analysis has been made in
\cite{boussaid2018spectral},
but that approach seems erroneous for spatial dimensions
$3$ and $4$
and is certainly useless
for potential applications to asymptotic stability.

In the present work, we develop a simple approach to the spectral stability
of bi-frequency solitary waves
in spatial dimensions three,
based on the radial reduction,
showing that the spectral stability of one-
and bi-frequency solitary waves is different.
As the matter of fact,
bi-frequency solitary waves
seem to have stability properties
which are the same or better
than those of one-frequency solitary waves;
see Remark~\ref{remark-better}.

The article is organized as follows.
In Section~\ref{sect-soler},
we describe the model and give the main notations.
In Section~\ref{sect-soler-one},
we perform the radial reduction
for the linear stability analysis
of one-frequency solitary waves
via decomposition into invariant spectral subspaces
corresponding to different spherical harmonics.
In Section~\ref{sect-soler-bi},
we generalize this approach
to linearization at a bi-frequency solitary waves
and in Section~\ref{sect-init}
show that we considered all perturbations.
In Appendix~\ref{sect-harmonics},
we provide our notations for the spherical harmonics.
In Appendix~\ref{sect-spin-orbit},
we derive the needed properties
of the spin-orbit operator in the Dirac theory.

\bigskip

\noindent
{\sc Acknowledgements.}
This research has been funded in whole or in part by the French National Research Agency (ANR) as part of the QuBiCCS project ``ANR-24-CE40-3008-01''.

This work was supported by a grant from the Simons Foundation (851052, A.C.).

For the purpose of its open access publication, the author/rights holder applies a CC-BY open access license to any article/manuscript accepted for publication (AAM) resulting from that submission.

\section{The Soler model}
\label{sect-soler}

The Soler model
has the form
\begin{equation}\label{Soler}
\jj\p\sb{t}\psi
=D_m\psi-f(\psi^*\beta\psi)\beta\psi
=-\jj\bm\alpha\cdot\nabla\psi+
(m-f(\psi^*\beta\psi))\beta\psi,
\quad
\psi(x,t)\in\C^4,
\ \ x\in\R^3,
\end{equation}
where
$\alpha^i$ ($1\le i\le 3$)
and $\beta$
are self-adjoint Dirac matrices
of size $N=4$;
$
D\sb m
=
\bm\alpha\cdot\bm{p}+\beta m
=-\jj\bm\alpha\cdot\nabla+\beta m
$, $m>0$,
is the Dirac operator.
The self-interaction in \eqref{Soler}
is represented by a real-valued function
$f$, with $f(0)=0$;
the original Soler model
\cite{jetp.8.260,PhysRevD.1.2766}
is cubic, so that $f(\tau)=\tau$, $\tau\in\R$.
Below, we will use the notation
\begin{align}\label{def-g}
g(\tau)=m-f(\tau),
\qquad
\tau\in\R.
\end{align}
The Dirac matrices are self-adjoint,
anticommuting, with their square being equal to one,
so that
\[
D_m^2=(-\Delta^2+m^2)I_4,
\]
with $I_4$ is the unit matrix of size $N$.
It then follows that $\alpha^j$ and $\beta$
satisfy the standard relations
\begin{align}\label{anti-alpha}
\alpha_i\alpha_j
+\alpha_j\alpha_i=2\delta_{i j} I_4,
\qquad
\alpha_i\beta
+\beta\alpha_i=0,
\qquad
1\le i,\,j\le 3;
\qquad
\beta^2=I_4.
\end{align}
The Dirac conjugate of $\psi\in\C^4$
is denoted by
$\bar\psi=\psi^*\beta$,
with $\psi^*$ the hermitian conjugate of $\psi$.

The Dirac matrices could be taken in the form
\begin{equation}\label{upalpha}
\alpha^{j}
=
\begin{bmatrix} 0&\sigma_j
\\ \sigma_j&0\end{bmatrix},
\qquad
\beta
=
\begin{bmatrix} I_2&0 \\ 0&-I_2\end{bmatrix},
\end{equation}
with
$\sigma_j$,
$1\le j\le 3$,
the Pauli matrices.
For $x\in\R^3\setminus\{0\}$, $r=\abs{x}>0$, we denote
\[
\sigma_r=r^{-1}x\cdot\bm\sigma,
\qquad
\alpha_r=r^{-1}x\cdot\bm\alpha,
\]
and we introduce
\[
\varSigma=\bm\sigma\cdot\nabla:\;L^2(\R^3,\C^2)\to L^2(\R^3,\C^2),
\qquad
\dom(\varSigma)=H^1(\R^3,\C^2),
\]
and let
$
\varSigma_\Omega:\,L^2(\mathbb{S}^2,\C^2)
\to L^2(\mathbb{S}^2,\C^2)$,
$\ \dom(\varSigma_\Omega)=H^1(\mathbb{S}^2,\C^2)$
be the angular part of $\varSigma$
defined by the relation
\[
\varSigma=\sigma_r\p_r+r^{-1}\varSigma_\Omega.
\]
Here is the Dirac operator in spherical coordinates:
\begin{equation}\label{dirac-spherical}
D\sb 0
=-\jj\bm\alpha\cdot\nabla
=-\jj\left(
\alpha\sb r\p\sb r
+r^{-1}\alpha\sb\phi\p\sb\phi
+r^{-1}\alpha\sb\theta\p\sb\theta
\right),
\end{equation}
$x\sp{1}=r\cos\phi\sin\theta$,
$x\sp{2}=r\sin\phi\sin\theta$,
$x\sp{3}=r\cos\theta$,
$\phi\in[0,2\pi)$,
$\theta\in [0,\pi]$,
$\alpha\sb{r}
=\begin{bmatrix}0&\sigma\sb{r}\\
\sigma\sb{r}&0\end{bmatrix}$,
$\alpha\sb{\phi}
=
\begin{bmatrix}0&\sigma\sb{\phi}\\
\sigma\sb{\phi}&0\end{bmatrix}$,
$\alpha\sb{\theta}
=
\begin{bmatrix}0&\sigma\sb{\theta}\\
\sigma\sb{\theta}&0\end{bmatrix}$,
with
\[
\sigma\sb{r}=
\begin{bmatrix}
\cos\theta&e^{-\jj\phi}\sin\theta\\
e^{\jj\phi}\sin\theta&-\cos\theta
\end{bmatrix},
\quad
\sigma\sb{\phi}=
\frac{1}{\sin\theta}
\begin{bmatrix}
0&-\jj e^{-\jj\phi}\\
\jj e^{\jj\phi}&0
\end{bmatrix},
\quad
\sigma\sb{\theta}=
\begin{bmatrix}
-\sin\theta&e^{-\jj\phi}\cos\theta\\
e^{\jj\phi}\cos\theta&\sin\theta
\end{bmatrix}.
\]
We define the operator
$
\varSigma\sb{\Omega}$
by the relation
$\varSigma=\sigma_r\p\sb r+r^{-1}\varSigma\sb{\Omega}$,
with
$
\varSigma
=\bm\sigma\cdot\nabla
=\sum\sb{j=1}\sp{3}\sigma\sb j\p\sb j
=\sigma\sb r\p\sb r
+r^{-1}\sigma\sb\phi\p\sb\phi
+r^{-1}\sigma\sb\theta\p\sb\theta
$;
that is,
$\varSigma_\Omega=\sigma_\phi\p_\phi+\sigma_\theta\p_\theta$.
Then
\begin{align}\label{sigma-Sigma-3d}
\varSigma\sb{\Omega}
\circ
\sigma\sb r
=2I_2
+\frac{\jj}{\sin\theta}\sigma\sb\theta\p\sb\phi
-\jj\sin\theta\,\sigma\sb\phi\p\sb\theta,
\qquad
\sigma\sb r\varSigma\sb{\Omega}
=
-\frac{\jj}{\sin\theta}\sigma\sb\theta\p\sb\phi
+\jj\sin\theta\,\sigma\sb\phi\p\sb\theta,
\end{align}
so that
$\{\varSigma\sb{\Omega},\sigma\sb r\}
=2 I_2$
(cf. Lemma~\ref{lemma-sigma}).
In the explicit form,
\begin{align}\label{sigma-Sigma-3d-explicit}
\sigma_r\varSigma
=
\p_r+r^{-1}\sigma_r\varSigma_\Omega
=
\p_r
-\frac{\jj}{r\sin\theta}
\begin{bmatrix}
-\sin\theta&e^{-\jj\phi}\cos\theta\\
e^{\jj\phi}\cos\theta&\sin\theta
\end{bmatrix}
\p\sb\phi
+
\frac{1}{r}
\begin{bmatrix}
0& e^{-\jj\phi}\\
-e^{\jj\phi}&0
\end{bmatrix}
\p\sb\theta.
\end{align}

\begin{lemma}\label{lemma-same-degree}
Let $\hh_{\ell,k}$ be a spherical harmonic of degree $\ell\in\N_0$
and order $k\in\N_0$, $-\ell\le \,k\le\ell$.
For any vectors $\bm\xi,\,\bm\eta\in\C^2\setminus\{0\}$,
the function
$h_{\ell,k}=\bm\eta^*\sigma_r\varSigma_\Omega \hh_{\ell,k}\bm\xi
\in C^\infty(\mathbb{S}^2)$
is a linear combination of spherical harmonics
of the same degree $\ell$:
there are coefficients
$C_{\ell,m,k}(\bm\xi,\bm\eta)\in\C$,
$-\ell\le m,\,k\le\ell$,
$k-1\le m\le k+1$,
such that
\[
\bm\eta^*\sigma_r\varSigma_\Omega \hh_{\ell,k}\bm\xi
=\sum\sb{m=\max(k-1,-\ell)}^{\min(k+1,\ell)}
\hh_{\ell,m}C_{\ell,m,k}(\bm\xi,\bm\eta).
\]
\end{lemma}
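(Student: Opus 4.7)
The strategy is to identify $\sigma_r\varSigma_\Omega$ with (minus) the spin--orbit operator $\bm\sigma\cdot\bm L$, where $\bm L=-\jj x\times\nabla$, and then exploit commutation with $\bm L^2$ to control the degree while using the explicit $\phi$-structure to control the order. The identity $\sigma_r\varSigma_\Omega=-\bm\sigma\cdot\bm L$ follows by applying $(\bm\sigma\cdot\bm a)(\bm\sigma\cdot\bm b)=\bm a\cdot\bm b+\jj\bm\sigma\cdot(\bm a\times\bm b)$ with $\bm a=r^{-1}x$, $\bm b=\nabla$ to obtain $\sigma_r\varSigma=\p_r-r^{-1}\bm\sigma\cdot\bm L$, and then comparing with the defining relation $\sigma_r\varSigma=\p_r+r^{-1}\sigma_r\varSigma_\Omega$ (cf.\ Lemma~\ref{lemma-sigma}).

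Since $\bm L^2$ commutes with each component $L_j$ and acts as the identity on the spinor index, it commutes with $\bm\sigma\cdot\bm L$. Hence if $\bm L^2\hh_{\ell,k}=\ell(\ell+1)\hh_{\ell,k}$, each $\C^2$-component of $\sigma_r\varSigma_\Omega(\hh_{\ell,k}\bm\xi)$ still lies in the $\ell(\ell+1)$-eigenspace of $\bm L^2$ on $L^2(\mathbb{S}^2)$. After pairing with $\bm\eta^*$, the scalar $h_{\ell,k}=\bm\eta^*\sigma_r\varSigma_\Omega\hh_{\ell,k}\bm\xi$ is already a $\C$-valued spherical harmonic of degree exactly $\ell$, i.e., a linear combination of $\{\hh_{\ell,m}\}_{-\ell\le m\le\ell}$.

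To restrict the allowed orders to $\{k-1,k,k+1\}$, I would decompose $\bm\sigma\cdot\bm L=\sigma_3L_3+\tfrac12(\sigma_+L_-+\sigma_-L_+)$ with $\sigma_\pm=\sigma_1\pm\jj\sigma_2$ and $L_\pm=L_1\pm\jj L_2$, then invoke $L_3\hh_{\ell,k}=k\hh_{\ell,k}$ together with $L_\pm\hh_{\ell,k}\propto\hh_{\ell,k\pm 1}$ (using the convention $\hh_{\ell,m}=0$ for $|m|>\ell$). The three summands send $\hh_{\ell,k}\bm\xi$ to $\C^2$-valued functions with $\phi$-dependence $e^{\jj k\phi}$, $e^{\jj(k-1)\phi}$, and $e^{\jj(k+1)\phi}$, respectively. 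Equivalently, one may read this directly off the matrix form \eqref{sigma-Sigma-3d-explicit}, whose diagonal entries are $\phi$-independent (preserving the order) and whose off-diagonal entries carry $e^{\pm\jj\phi}$ (shifting it by $\pm 1$).

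Combining the two reductions yields the claim; I do not expect a substantive obstacle. The boundary case $|k|=\ell$ is automatic since $L_\pm$ annihilates $\hh_{\ell,\pm\ell}$ (equivalently, no spherical harmonic of degree $\ell$ exists with order outside $[-\ell,\ell]$), and the coefficients $C_{\ell,m,k}(\bm\xi,\bm\eta)$ inherit a sesquilinear dependence on $(\bm\xi,\bm\eta)$ from the Pauli-matrix contractions and the Clebsch--Gordan-type factors produced by $L_\pm$.
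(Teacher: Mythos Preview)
Your argument is correct and, at its core, the same as the paper's: both show that $\sigma_r\varSigma_\Omega$ commutes with $\Delta_\Omega$ to fix the degree, then read off the $e^{\pm\jj\phi}$ structure (via \eqref{sigma-Sigma-3d-explicit} or, equivalently, your $L_\pm$ decomposition) to bound the order shift by~$1$. The packaging differs: the paper observes that $\sigma_r\varSigma_\Omega=r\sigma_r\,\bm\sigma\cdot\nabla-r\p_r$ equals $-(S_0-\tfrac{n-1}{2})$ for the spin--orbit operator $S_0$ of Appendix~\ref{sect-spin-orbit} (with $\sigma_i$ in place of $\alpha^i$), and then invokes Lemma~\ref{lemma-s} and the identity $\Delta_\Omega=(\tfrac{n-2}{2})^2-(S_0-\tfrac12)^2$. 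Your route via the Pauli identity and $[\bm L^2,L_j]=0$ is the same computation written out directly---indeed $S_0=\bm\sigma\cdot\bm L+1$ in $n=3$---and has the advantage of being self-contained, not requiring the appendix machinery. Your additional remark that the coefficients depend sesquilinearly on $(\bm\xi,\bm\eta)$ is correct and slightly sharper than what the paper states.
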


\begin{proof}
The operator
$\sigma_r\varSigma_\Omega
=r\sigma_r\,\bm\sigma\cdot\nabla-r\p_r$
commutes with
$S_0:=r\p_r-r\sigma_r\,\bm\sigma\cdot\nabla+(n-1)/2$
(here $n=3$)
and hence with
$\Delta_\Omega=(\frac{n-2}{2})^2-(S_0-1/2)^2$
(this follows from Lemma~\ref{lemma-s}
applied to
$S_0=r\p_r-r\sigma_r\,\bm\sigma\cdot\nabla+(n-1)/2$ 
as in \eqref{def-s},
with $n=3$ and with the self-adjoint matrices
$\sigma_i$, $1\le i\le 3$,
in place of $\alpha^i$).
Therefore,
$\Delta_\Omega
\big(\bm\eta^*\sigma_r\varSigma_\Omega \hh\bm\xi\big)
=
\bm\eta^*
\sigma_r\varSigma_\Omega
\Delta_\Omega
\hh\bm\xi
=
-\varkappa_{\ell}
\bm\eta^*\sigma_r\varSigma_\Omega \hh\bm\xi$,
showing that
$\bm\eta^*\sigma_r\varSigma_\Omega \hh\bm\xi$
is also a spherical harmonic of degree $\ell$.
The conclusion that
$C_{m,k}\ne 0$ only if $k-1\le m\le k+1$
follows from \eqref{sigma-Sigma-3d-explicit}
(which only contains
terms with no $\phi$-dependence
or
terms with $e^{\pm\jj\phi}$)
\end{proof}

\begin{lemma}\label{lemma-mssm}
We have:
\begin{align}\label{mssm-1}
&
\e_1^*\sigma_r\varSigma_\Omega \hh_{\ell,m}\e_1=-m \hh_{\ell,m},
\qquad
\e_2^*\sigma_r\varSigma_\Omega \hh_{\ell,m}\e_2=m \hh_{\ell,m};
\\
\label{mssm-2}
&
\e_2^*\sigma_r\varSigma_\Omega \hh_{\ell,m}\e_1=m \hh_{\ell,m} e^{-\jj\phi}\frac{\cos\theta}{\sin\theta}+ \frac{e^{\jj\phi}}2
\left[ \sqrt{\ell+m}\sqrt{\ell-m+1}\hh_{\ell,m-1} - \sqrt{\ell-m}\sqrt{\ell+m+1}\hh_{\ell,m+1}\right];\nonumber
\\
&\e_1^*\sigma_r\varSigma_\Omega \hh_{\ell,m}\e_2=m \hh_{\ell,m}e^{-\jj\phi}\frac{\cos\theta}{\sin\theta}-\frac{e^{-\jj\phi}}2
\left[ \sqrt{\ell+m}\sqrt{\ell-m+1}\hh_{\ell,m-1} - \sqrt{\ell-m}\sqrt{\ell+m+1}\hh_{\ell,m+1}\right];\nonumber
\\
&
\e_1^*\sigma_r\varSigma_\Omega \hh_{\ell,-\ell}\e_2=0,
\quad\qquad\qquad
\e_2^*\sigma_r\varSigma_\Omega \hh_{\ell,\ell}\e_1=0,
\end{align}
for $\ell\in\N_0$, $m\in\N_0$ and $-\ell\le m\le \ell$.
\end{lemma}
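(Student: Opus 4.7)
The plan is to reduce the lemma to evaluating the four scalar entries of $\sigma_r\varSigma_\Omega$, which the formula \eqref{sigma-Sigma-3d-explicit} already makes explicit: stripping the radial operator from \eqref{sigma-Sigma-3d-explicit} gives
\[
\sigma_r\varSigma_\Omega = \begin{bmatrix}
\jj\,\p_\phi & e^{-\jj\phi}\bigl(\p_\theta - \jj\cot\theta\,\p_\phi\bigr) \\
-e^{\jj\phi}\bigl(\p_\theta + \jj\cot\theta\,\p_\phi\bigr) & -\jj\,\p_\phi
\end{bmatrix},
\]
so that each of the four quantities $\e_i^*\sigma_r\varSigma_\Omega(\hh_{\ell,m}\e_j)$ is simply the matrix entry $(i,j)$ applied to the scalar spherical harmonic $\hh_{\ell,m}$.

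For the diagonal identities \eqref{mssm-1}, the two diagonal entries yield, via $\p_\phi\hh_{\ell,m} = \jj m\,\hh_{\ell,m}$ (a consequence of the $e^{\jj m\phi}$ factor built into the convention of Appendix~\ref{sect-harmonics}), the values $-m\hh_{\ell,m}$ and $+m\hh_{\ell,m}$, matching the statement at once. For the off-diagonal identities in \eqref{mssm-2}, I would recognize the differential operators appearing in the $(1,2)$ and $(2,1)$ entries as being proportional to the standard angular-momentum ladder operators
\[
L_\pm = \pm e^{\pm\jj\phi}\bigl(\p_\theta \pm \jj\cot\theta\,\p_\phi\bigr), \qquad L_\pm\hh_{\ell,m} = \sqrt{(\ell\mp m)(\ell\pm m+1)}\,\hh_{\ell,m\pm 1}.
\]
Writing $\jj\cot\theta\,\p_\phi$ explicitly as $-m\cot\theta$ on $\hh_{\ell,m}$ separates the off-diagonal action into a ``diagonal in $m$'' piece proportional to $m\hh_{\ell,m}\cot\theta$ (with its attendant $e^{\pm\jj\phi}$ prefactor) and a genuine ladder piece producing $\hh_{\ell,m\pm 1}$; collecting these is exactly the structure of the right-hand side of \eqref{mssm-2}. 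The two boundary identities at the end of \eqref{mssm-2} are then just the standard annihilations $L_+\hh_{\ell,\ell} = 0$ and $L_-\hh_{\ell,-\ell} = 0$, visible already from the vanishing of the square-root coefficient $\sqrt{(\ell-m)(\ell+m+1)}$ at $m=\ell$ and of $\sqrt{(\ell+m)(\ell-m+1)}$ at $m=-\ell$.

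The main obstacle is purely bookkeeping, namely matching the numerical prefactors (the overall $1/2$ and the Condon--Shortley-type signs in \eqref{mssm-2}) to whichever specific normalization of $\hh_{\ell,m}$ is adopted in Appendix~\ref{sect-harmonics} -- whether $L^2$-normalized, of the form $P_\ell^{(|m|)}(\cos\theta)\sin^{|m|}\theta\,e^{\jj m\phi}$ as in the commented example at $\ell=1$, or otherwise. Once that convention is fixed and the ladder-operator normalization is adjusted correspondingly, the proof reduces to direct substitution and collection of terms in the $2\times 2$ matrix above, with no further structural input required.
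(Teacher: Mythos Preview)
Your proposal is correct and follows essentially the same route as the paper's proof. Both extract the angular part of \eqref{sigma-Sigma-3d-explicit} to obtain the explicit $2\times 2$ matrix of $\sigma_r\varSigma_\Omega$, read off the diagonal entries via $\p_\phi\hh_{\ell,m}=\jj m\,\hh_{\ell,m}$, and handle the off-diagonal entries by a Legendre/spherical-harmonic recurrence; the paper writes this recurrence as a formula for $\p_\theta\hh_{\ell,m}$, while you phrase it in terms of the ladder operators $L_\pm$, which is the same content. For the extremal identities the paper performs the direct computation on $\sin^\ell\theta\,e^{\mp\jj\ell\phi}$, whereas you invoke the annihilations $L_-\hh_{\ell,-\ell}=0$ and $L_+\hh_{\ell,\ell}=0$; these are equivalent verifications of the same fact.
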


Above, $\e_1$ and $\e_2$
denote the standard basis vectors in $\C^2$.

\begin{proof}
The statement
\eqref{mssm-1}
follows
from the relation
(cf. \eqref{sigma-Sigma-3d-explicit})
\begin{equation}\label{Eq:Important}
\sigma_r\varSigma_\Omega \hh_{\ell,m}
=\frac{1}{\sin\theta}
\begin{bmatrix}
-\sin\theta&e^{-\jj\phi}\cos\theta
\\e^{\jj\phi}\cos\theta&\sin\theta
\end{bmatrix}
m \hh_{\ell,m}
+
\begin{bmatrix}
0&e^{-\jj\phi}\\-e^{\jj\phi}&0
\end{bmatrix}
\p_\theta \hh_{\ell,m}
\end{equation}
\nb{Using $\sqrt{1-x^2}\frac{d}{dx}{P_\ell^m}(x) = \frac12 \left[ (\ell+m)(\ell-m+1)P_\ell^{m-1}(x) - P_\ell^{m+1}(x)\right]$
the last term become explicit, no? This provides a more complete proof.
Lemma~\ref{lemma-same-degree} also becomes shorter?
The sequel can be included if we use the $P_\ell^m=0$ for $|m|>\ell$, no?}
and
\begin{align*}
\p_\theta  \hh_{\ell,m}
(\theta,\phi)
&=
-\frac12
\left[ \sqrt{\ell+m}\sqrt{\ell-m+1}\hh_{\ell,m-1} - \sqrt{\ell-m}\sqrt{\ell+m+1}\hh_{\ell,m+1}\right].
\end{align*}
For \eqref{mssm-2},
we notice that
$\hh_{\ell,-\ell}=c_{\ell}
\sin^\ell\theta e^{-\jj\ell\phi}$
(with some $c_{\ell}\in\R$)
hence, for $\ell\ge 1$,
\[
\sigma_r\varSigma_\Omega
\sin^\ell\theta e^{-\jj\ell\phi}
\e_2
=
\begin{bmatrix}
e^{-\jj\phi}\cos\theta
\\\sin\theta
\end{bmatrix}
(-\ell)
\sin^{\ell-1}\theta e^{-\jj\ell\phi}
+
\begin{bmatrix}
e^{-\jj\phi}\\0
\end{bmatrix}
\ell\sin^{\ell-1}\theta\cos\theta e^{-\jj\ell\phi}
=
(-\ell)
\sin^\ell\theta e^{-\jj\ell\phi}\e_2.
\]
The first relation in \eqref{mssm-2} follows;
the second is proved similarly.
\end{proof}

\section{Invariant subspaces
for linearization at one-frequency solitary waves}
\label{sect-soler-one}

Given a solitary wave
\begin{align}\label{sw}
\phi(x)e^{-\jj\omega t},
\qquad
\phi(x)
=
\begin{bmatrix}
v(r,\omega)\e_1
\\
\jj\sigma_r u(r,\omega)\e_1
\end{bmatrix}
e^{-\jj\omega t}
,
\end{align}
with $v,\,u$ satisfying the system
\[
\begin{cases}
\omega v=\p_r u+\frac{2}{r}u+g(v^2-u^2)v,
\\
\omega u=-\p_r v-g(v^2-u^2)u,
\end{cases}
\qquad
r>0
\]
(see e.g.
\cite{esteban1995stationary,boussaid2017nonrelativistic}),
we consider a perturbation
\begin{align}\label{swp}
\big(
\phi(x)+
\rho(t,x)
\big)e^{-\jj\omega t},
\qquad
\rho(t,x)\in\C^4.
\end{align}
The linearization at $\phi e^{-\jj\omega t}$
-- that is, the linearized equation on
$\rho$
-- takes the form
\begin{align}\label{lin}
\jj\p_t\rho
=\calL\rho
:=D_0\rho+g\beta\rho+2g'\beta\phi\Re(\phi^*\beta\rho)
-\omega\rho,\qquad
\dom(\calL)=H^1(\R^3,\C^4),
\end{align}
where both $g=g(\tau)$ and $g'=g'(\tau)$
are evaluated at $\tau=\phi^*\beta\phi
=v^2-u^2$.
We note that the operator $\calL$
in \eqref{lin}
is not $\C$-linear
because of the term $\Re(\phi^*\beta\rho)$.

We also introduce a self-adjoint operator
\begin{align}\label{def-l0}
\calL_0=D_0+g\beta-\omega I,
\qquad
\dom(\calL_0)=H^1(\R^3,\C^4).
\end{align}


\begin{lemma}
\label{lemma-lm}
\begin{enumerate}
\item
\label{lemma-lm-1}There are the following
invariant subspaces
of the linearization operator $\calL$:
\nb{The spaces are rewritten so as $A$, $B$, ... and $S$ are in $L^2$}
\begin{align*}
\scrX_{\ell,0}
&=
\left\{
\begin{bmatrix}
(A_0+B_0\sigma_r\varSigma)\hh_{\ell,0}\e_1
\\
\jj\sigma_r(P_0+Q_0\sigma_r\varSigma)\hh_{\ell,0}\e_1
\end{bmatrix}
\mid A_0,\,B_0,\,P_0,\,Q_0\in L^2(\R^+, r\,\mathrm{d}r)
\right\},
\\
\scrX_{\ell,m}
&=
\left\{
\sum\limits\sb{\pm}
\begin{bmatrix}
(A_{\pm m}+B_{\pm m}\sigma_r\varSigma)\hh_{\ell,\pm m}\e_1
\\
\jj\sigma_r(P_{\pm m}+Q_{\pm m}\sigma_r\varSigma)\hh_{\ell,\pm m}\e_1
\end{bmatrix}
\mid A_{\pm m},\,B_{\pm m},\,P_{\pm m},\,Q_{\pm m}\in L^2(\R^+, r\,\mathrm{d}r)
\right\},
\\
\scrY_{\ell}
&=
\Biggl\{
\begin{bmatrix}
R_{\ell}\hh_{\ell,-\ell}\e_2
\\
\jj\sigma_r S_{\ell}\hh_{\ell,-\ell}\e_2
\end{bmatrix}
\mid R_{\ell},\,S_{\ell} \in L^2(\R^+, r\,\mathrm{d}r)
\Biggr\},
\end{align*}
where
$\ell\in\N_0$,
$m\in\N$
and
$m\le\ell$.
\item
\label{lemma-lm-2}
There is the inclusion
$\sigma\big(-\jj\calL\at{\scrY_{\ell}}\big)\subset\jj\R$.
\end{enumerate}
\end{lemma}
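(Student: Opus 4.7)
I split the linearization as $\calL = \calL_0 + N$, where $\calL_0 = D_0 + g\beta - \omega I$ is the $\C$-linear self-adjoint operator \eqref{def-l0} and $N\rho := 2g'\beta\phi\,\Re(\phi^*\beta\rho)$ is the $\R$-linear, pointwise rank-one remainder that encodes the failure of $\C$-linearity of $\calL$. For part \itref{lemma-lm-1} I verify invariance of each listed subspace under $\calL_0$ and under $N$ separately; for part \itref{lemma-lm-2} I show that $N$ vanishes identically on $\scrY_\ell$, so that $\calL|_{\scrY_\ell}=\calL_0|_{\scrY_\ell}$ inherits self-adjointness and hence real spectrum.

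\textbf{Handling $\scrY_\ell$.} Take $\rho=\begin{bmatrix}R\,\hh_{\ell,-\ell}\e_2\\\jj\sigma_r S\,\hh_{\ell,-\ell}\e_2\end{bmatrix}\in\scrY_\ell$. Using the block form $\bm\alpha\cdot\nabla=\begin{bmatrix}0&\varSigma\\\varSigma&0\end{bmatrix}$, the decomposition $\varSigma=\sigma_r\p_r+r^{-1}\varSigma_\Omega$, the anticommutator $\{\sigma_r,\varSigma_\Omega\}=2I_2$, and the eigen-equation $\sigma_r\varSigma_\Omega(\hh_{\ell,-\ell}\e_2)=-\ell\,\hh_{\ell,-\ell}\e_2$ (which is precisely the $m=-\ell$ case of Lemma~\ref{lemma-mssm}, combining $\e_2^*\sigma_r\varSigma_\Omega\hh_{\ell,-\ell}\e_2=-\ell\hh_{\ell,-\ell}$ with $\e_1^*\sigma_r\varSigma_\Omega\hh_{\ell,-\ell}\e_2=0$), a direct computation yields
\begin{equation*}
D_0\rho=\begin{bmatrix}\bigl(\p_r+(\ell+2)r^{-1}\bigr)S\,\hh_{\ell,-\ell}\e_2\\-\jj\sigma_r\bigl(\p_r-\ell r^{-1}\bigr)R\,\hh_{\ell,-\ell}\e_2\end{bmatrix}\in\scrY_\ell,
\end{equation*}
while multiplications by the radial scalars $g(r)\beta$ and $\omega I$ manifestly preserve the ansatz. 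For the nonlinear piece, $\phi^*\beta=(v\,\e_1^*,\,\jj u\,\e_1^*\sigma_r)$; using $\sigma_r^2=I_2$ together with the key orthogonality $\e_1^*\e_2=0$,
\begin{equation*}
\phi^*\beta\rho=vR(\e_1^*\e_2)\hh_{\ell,-\ell}-uS(\e_1^*\e_2)\hh_{\ell,-\ell}=0.
\end{equation*}
Hence $N|_{\scrY_\ell}\equiv 0$, so $\scrY_\ell$ is $\calL$-invariant and $\calL|_{\scrY_\ell}=\calL_0|_{\scrY_\ell}$. Since $\calL_0$ is self-adjoint on $L^2(\R^3,\C^4)$ with domain $H^1$ and the orthogonal projector onto the closed subspace $\scrY_\ell$ preserves $H^1$ (as one reads from the explicit parameterization), $\scrY_\ell$ is a reducing subspace for $\calL_0$; consequently $\calL_0|_{\scrY_\ell}$ is self-adjoint on $\scrY_\ell$, its spectrum lies in $\R$, and $\sigma(-\jj\calL|_{\scrY_\ell})\subset\jj\R$, which is part \itref{lemma-lm-2}.

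\textbf{The $\scrX$ subspaces and main obstacle.} Invariance of $\scrX_{\ell,0}$ and $\scrX_{\ell,m}$ is the bookkeeping-heavy step: one must verify that $D_0$ shuffles the four radial amplitudes $(A_0,B_0,P_0,Q_0)$ -- respectively the eight amplitudes $(A_{\pm m},B_{\pm m},P_{\pm m},Q_{\pm m})$ -- among themselves, which requires expanding compositions such as $\sigma_r\varSigma\circ\sigma_r\varSigma$ via $\{\sigma_r,\varSigma_\Omega\}=2I_2$ and the spin-orbit identities of Appendix~\ref{sect-spin-orbit}. In addition one must check that $\phi^*\beta\rho$ reduces to a linear combination of radial functions times $\hh_{\ell,0}$ (respectively $\hh_{\ell,\pm m}$), so that $N\rho = 2g'\beta\phi\,\Re(\phi^*\beta\rho)$ stays in the same subspace. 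This angular algebra is the main technical obstacle in part \itref{lemma-lm-1}; part \itref{lemma-lm-2} is independent of it, resting solely on the vanishing of $N$ on $\scrY_\ell$ observed above.
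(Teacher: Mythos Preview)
Your approach is exactly the paper's: split $\calL=\calL_0+N$, verify each piece separately, and for Part~\itref{lemma-lm-2} observe that $\phi^*\beta\rho=0$ on $\scrY_\ell$ so that $\calL|_{\scrY_\ell}=\calL_0|_{\scrY_\ell}$ is self-adjoint. Your treatment of $\scrY_\ell$ (the $D_0$ computation and the orthogonality argument) is complete and matches the paper.

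The one place you stop short is the $\scrX_{\ell,m}$ invariance, which you describe but do not execute. The paper carries this out explicitly: the $D_0$ action is worked out in the displayed computation~\eqref{mis}, using Lemma~\ref{lemma-sigma} (for $\varSigma\circ\sigma_r$) and Lemma~\ref{lemma-identities} (for $\varSigma\sigma_r\varSigma=-\sigma_r\Delta$ in $n=3$), and the angular reduction of $N\rho$ hinges on the identity $\e_1^*\sigma_r\varSigma_\Omega\hh_{\ell,\pm m}\e_1=\mp m\,\hh_{\ell,\pm m}$ from Lemma~\ref{lemma-mssm} together with $\overline{\hh_{\ell,m}}=\hh_{\ell,-m}$, which is precisely why the $\pm m$ harmonics must be paired in $\scrX_{\ell,m}$ for $m\ne 0$. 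No new idea is needed beyond what you already name; it is just the bookkeeping you flag as the ``main technical obstacle.''
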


\begin{proof}
\nb{Do we do the proof with such functions smooth and localized and identify the domain in terms of the coefficients $A$, $B$, ....?}
Clearly, multiplication by a scalar or by $\beta$
is invariant in $\scrX_{\ell,m}$ and in $\scrY_{\ell}$.
To prove the invariance of $D_0$
in $\scrX_{\ell,m}$ and in $\scrY_{\ell}$,
we compute:
\begin{align}
\label{mis}
&
D_0
\begin{bmatrix}
(A+B\sigma_r\varSigma)\hh\e_1
\\
\jj\sigma_r(P+Q\sigma_r\varSigma)\hh\e_1
\end{bmatrix}
=
-\jj
\begin{bmatrix}0&\varSigma\\\varSigma&0\end{bmatrix}
\begin{bmatrix}
(A+B\sigma_r\varSigma)\hh\e_1
\\
\jj\sigma_r(P+Q\sigma_r\varSigma)\hh\e_1
\end{bmatrix}
=
\begin{bmatrix}
\varSigma
\big(\sigma_r P+Q\varSigma\big)\hh
\e_1
\\
-\jj\varSigma(A+B\sigma_r\varSigma)\hh\e_1
\end{bmatrix}
\nonumber
\\[1ex]
&
\quad
=
\begin{bmatrix}
\big(
P'
+P\varSigma\circ\sigma_r
+Q'\sigma_r\varSigma
+Q\Delta
\big)\hh
\e_1
\\
-\jj(
A'\sigma_r
+A\varSigma
+B'\varSigma
+B\varSigma\sigma_r\varSigma
)\hh\e_1
\end{bmatrix}
=
\begin{bmatrix}
\big(
P'+\frac{2}{r}P-P\sigma_r\varSigma
+
Q'\sigma_r\varSigma+Q\Delta\big)\hh
\e_1
\\
-\jj(A'\sigma_r
+A\varSigma
+B'\varSigma
-B\sigma_r\Delta
)\hh\e_1
\end{bmatrix}
\nonumber
\\[1ex]
&
\quad
=
\begin{bmatrix}
\big(
P'+\frac{2}{r}P-\frac{\varkappa_{\ell}}{r^2}Q
+(Q'-P)\sigma_r\varSigma
\big)\hh
\e_1
\\
-\jj\Big(
 \sigma_r (A'+\frac{\varkappa_{\ell}}{r^2}B)
 +(A+B')\varSigma
\Big)\hh\e_1
\end{bmatrix},
\end{align}
where
$\varkappa_{\ell}:=\ell(\ell+1)$,
$\ell\in\N_0$,
are eigenvalues of the Laplace--Beltrami
operator on the sphere $\mathbb{S}^2$.
In the last two relations,
we used 
Lemma~\ref{lemma-sigma}
and Lemma~\ref{lemma-identities}.
This shows that
\nb{Don't we need to treat $\sigma_r\varSigma\hh\e_1$ before concluding?}
$D_0\scrX_{\ell,m}\subset\scrX_{\ell,m}$
(for $0\le m\le\ell$)
and
\nb{Is the following proved?}
$D_0\scrY_{\ell}\subset\scrY_{\ell}$.

Let us consider the term
$2g'\beta\phi\Re(\phi^*\beta\rho)$
in equation \eqref{lin}
for $\rho\in\scrX_{\ell,m}$,
$0\le m\le\ell$.
By Lemma~\ref{lemma-mssm},
one has
$\e_1^*\sigma_r\varSigma \hh_{\ell,\pm m}\e_1
=\mp r^{-1}m \hh_{\ell,\pm m}$;
therefore,
\[
\begin{bmatrix}
v\e_1
\\
\jj u \sigma_r\e_1
\end{bmatrix}^*
\beta
\sum\sb\pm
\begin{bmatrix}
(A_{\pm m}+B_{\pm m}\sigma_r\varSigma)\hh_{\ell,\pm m}\e_1
\\
\jj\sigma_r(P_{\pm m}+Q_{\pm m}\sigma_r\varSigma)\hh_{\ell,\pm m}
\e_1
\end{bmatrix}
=
\sum\sb\pm
\Big(
v A_{\pm m}-u P_{\pm m}\mp\frac{m}{r}(v B_{\pm m}-u Q_{\pm m})
\Big)\hh_{\ell,\pm m}.
\]
Taking into account that
$\bar \hh_{\ell,m}=\hh_{\ell,-m}$,
one derives:
\begin{align}\label{2re}
2\Re(\phi^*\beta\rho)
&=
\sum_\pm
\Big\{
\Big(
v A_{\pm m}-u P_{\pm m}
\mp\frac{m}{r}(v B_{\pm m}-u Q_{\pm m})
\Big)\hh_{\ell,\pm m}
\nonumber
\\
&
\qquad\qquad
+
\Big(
v\bar A_{\pm m}-u\bar P_{\pm m}
\mp\frac{m}{r}(v\bar B_{\pm m}-u \bar Q_{\pm m})
\Big)\hh_{\ell,\mp m}
\Big\}.
\end{align}
This shows that
$2g'\beta\phi\Re(\phi^*\beta\rho)\in\scrX_{\ell,m}$,
completing the proof of
Part~\itref{lemma-lm-1}.
\nb{What about $m=0$ and $\scrY_{\ell}
$?}

\nb{Maybe the following can be put in a remark?}
For future reference,
using \eqref{2re} and introducing the notation
\begin{equation}\label{def-w}
W=g'\begin{bmatrix}v^2&-uv\\-uv&u^2
\end{bmatrix},
\end{equation}
we can write:
\begin{align}\label{wcw}
2g'\begin{bmatrix}v\\-u\\0\\0
\end{bmatrix}
\Re(\phi^*\beta\rho)
=
\sum_\pm
\begin{bmatrix}W&\mp\frac{m}{r}W\\0&0
\end{bmatrix}
\left(
\begin{bmatrix}A_{\pm m}\\P_{\pm m}\\B_{\pm m}\\Q_{\pm m}
\end{bmatrix}
\hh_{\ell,\pm m}
+
\begin{bmatrix}\bar A_{\pm m}\\\bar P_{\pm m}\\\bar B_{\pm m}\\\bar Q_{\pm m}
\end{bmatrix}\hh_{\ell,\mp m}
\right)
\nonumber
\\
=
\sum_\pm
\begin{bmatrix}W&\mp \frac{m}{r}W\\0&0
\end{bmatrix}
\left(
\Psi_{\pm m}
\hh_{\ell,\pm m}
+
\bar\Psi_{\pm m}
\hh_{\ell,\mp m}
\right)
,\end{align}
where
$\Psi_{\pm m}(t,r)
=\big(A_{\pm m}(t,r),P_{\pm m}(t,r),B_{\pm m}(t,r),Q_{\pm m}(t,r)\big)^T$
and with
$\bar\Psi_{\pm m}(t,r)$
being the complex conjugate
(not a Hermitian conjugate).
One derives:
\begin{align}\label{re-dots}
&
2\begin{bmatrix}v\\-u\\0\\0
\end{bmatrix}
\Re\big(
\phi^*
\beta
\rho
\big)
=
\sum\sb\pm
\begin{bmatrix}W&\mp\frac m r W\\0&0\end{bmatrix}
\big(
\Psi_{\pm m}
\hh_{\ell,\pm m}
+
\bar\Psi_{\pm m}
\hh_{\ell,\mp m}
\big)
.
\end{align}

Let us prove Part~\itref{lemma-lm-2}.
One can see that for
$\phi$ from \eqref{sw}
and for $\rho\in\scrY_{\ell}$
one has
$\phi^*\beta\rho=0$
and hence one arrives at
$\calL\rho=\calL_0\rho$,
with $\calL_0$
from \eqref{def-l0}
being $\C$-linear and selfadjoint,
hence $\sigma\big(\calL\at{\scrY_{\ell}}\big)\subset\R$.
This completes the proof.
\end{proof}

We claim that the
union of the invariant subspaces
$\scrX_{\ell,m}$ 
and $\scrY_{\ell}$
contains the whole space
$L^2(\R^3,\C^4)$.

\begin{lemma}
\label{lemma-all-1}
For each
$\varrho\in\mathscr{S}(\R^3,\C^4)$,
there is a
\ac{(unique)???}
set of functions
$A_{\ell,m}(r),\,B_{\ell,m}(r)$,
$P_{\ell,m}(r),\,Q_{\ell,m}(r)$,
$R_{\ell}(r)$,
$S_{\ell}(r)\in\mathscr{S}(\R^3,\C^4)$,
with
$\ell\in\N_0$,
$m\in\Z$, $\abs{m}\le\ell$,
such that
\begin{align}\label{abr}
\sum_{\ell\in\N_0}
\sum\sb{-\ell\le m\le\ell}
\begin{bmatrix}
(A_{\ell,m}(r)+B_{\ell,m}(r)\sigma_r\varSigma)\hh_{\ell,m}\e_1
\nonumber
\\
\jj\sigma_r(P_{\ell,m}(r)+Q_{\ell,m}(r)\sigma_r\varSigma)\hh_{\ell,m}\e_1
\end{bmatrix}
+
\sum_{\ell\in\N_0}
\begin{bmatrix}
R_{\ell}(r)\hh_{\ell,-\ell}\e_2
\\
\jj\sigma_r S_{\ell}(r)\hh_{\ell,-\ell}\e_2
\end{bmatrix}
=\varrho(x).
\end{align}
\end{lemma}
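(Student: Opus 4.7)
The plan is to reduce \eqref{abr} to an expansion of a single $\C^2$-valued Schwartz function, and then to match coefficients against the standard orthogonal basis $\{\hh_{\ell,m}\,\e_i\}$ of $L^2(\mathbb{S}^2,\C^2)$.

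First, write $\varrho=\begin{pmatrix}\varrho_+\\ \varrho_-\end{pmatrix}$ with $\varrho_\pm\in\mathscr{S}(\R^3,\C^2)$, and set $\tilde\varrho_-:=-\jj\sigma_r\varrho_-$, so that $\varrho_-=\jj\sigma_r\tilde\varrho_-$ (using $\sigma_r^2=I_2$ on $\R^3\setminus\{0\}$). Both the upper line of \eqref{abr} and the $\jj\sigma_r(\,\cdot\,)$ prefactor on its lower line then take the identical template
\[
\varphi(x)=\sum_{\ell,m}\bigl(A_{\ell,m}(r)+B_{\ell,m}(r)\,\sigma_r\varSigma\bigr)\hh_{\ell,m}(\theta,\phi)\e_1+\sum_{\ell}R_\ell(r)\,\hh_{\ell,-\ell}(\theta,\phi)\e_2,
\]
so it is enough to represent any $\varphi\in\mathscr{S}(\R^3,\C^2)$ this way. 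The result is then applied once with $\varphi=\varrho_+$ (producing $A,B,R$) and once with $\varphi=\tilde\varrho_-$ (producing $P,Q,S$).

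Second, expand $\varphi(r,\omega)$ in the basis $\{\hh_{\ell,m}\,\e_i\}$ of $L^2(\mathbb{S}^2,\C^2)$, producing smooth radial coefficients $\alpha_{\ell,m}(r)$ for the $\e_1$-part and $\gamma_{\ell,m}(r)$ for the $\e_2$-part. Since $\hh_{\ell,m}$ is $r$-independent, one has $\sigma_r\varSigma\,\hh_{\ell,m}\e_1=r^{-1}\sigma_r\varSigma_\Omega\,\hh_{\ell,m}\e_1$, and Lemma~\ref{lemma-mssm} identifies this as $-m\,r^{-1}\hh_{\ell,m}\e_1$ in the $\e_1$-direction plus an $\e_2$-component lying in $\mathrm{span}\{\hh_{\ell,m-1},\hh_{\ell,m+1}\}$ with explicit ladder-type coefficients; the coefficient in front of $\hh_{\ell,m+1}$ is proportional to $\sqrt{(\ell-m)(\ell+m+1)}$ and vanishes precisely at $m=\ell$. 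The crucial combinatorial observation is that as $B_{\ell,m}$ ranges over $m=-\ell,\ldots,\ell$, the resulting $\e_2$-contributions at degree $\ell$ span a subspace of codimension one inside $\mathrm{span}\{\hh_{\ell,k}\e_2:|k|\le\ell\}$, the missing direction being exactly $\hh_{\ell,-\ell}\e_2$ (by the vanishing relations in Lemma~\ref{lemma-mssm}); this is precisely what the $\scrY_\ell$-summand supplies.

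Third, I would solve for the coefficients degree by degree. For each $\ell\ge 0$, define $R_\ell(r):=\gamma_{\ell,-\ell}(r)$, set $B_{\ell,\ell}(r):=0$ to remove the one-dimensional redundancy (thereby also securing uniqueness), and determine $B_{\ell,m}(r)$ for $m=-\ell,\ldots,\ell-1$ by matching the $\hh_{\ell,k}\e_2$ coefficients via a triangular linear system, using the nonvanishing of the ladder coefficients. Finally set $A_{\ell,m}(r):=\alpha_{\ell,m}(r)+m\,r^{-1}B_{\ell,m}(r)$; the apparent $r^{-1}$ is cancelled by a compensating factor of $r$ in $B_{\ell,m}$, making $A_{\ell,m}$ smooth. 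By construction, the resulting series reproduces $\varphi$ term by term.

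The main technical obstacle will be verifying regularity at $r=0$: Schwartz decay at infinity is inherited trivially from $\varphi$, but near the origin one must invoke the classical vanishing of spherical-harmonic coefficients $\alpha_{\ell,m}(r),\gamma_{\ell,m}(r)$ to order $r^\ell$ for a function smooth at $0$, together with the fact that $\sigma_r$ itself is bounded but not smooth across the origin. This standard fact about harmonic expansions on $\R^3$ ensures that the constructed $A,B,R,P,Q,S$ are smooth on $\R^+$ and decay rapidly at infinity, hence lie in the spaces claimed.
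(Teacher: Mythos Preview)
Your proposal is correct and supplies far more detail than the paper does: the paper's entire proof of this lemma is the sentence ``The above lemma follows by inspection.''  Your reduction to a single $\C^2$-template via $\tilde\varrho_-=-\jj\sigma_r\varrho_-$ and the subsequent degree-by-degree matching is exactly the mechanism the paper invokes (explicitly) in the closely related Lemma~\ref{lemma-all-2}, including the observation that the $B_{\ell,\ell}$-slot is redundant.

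One small sharpening: the $\e_2$-component of $\sigma_r\varSigma_\Omega\hh_{\ell,m}\e_1$ is in fact purely proportional to $\hh_{\ell,m+1}$ (its $\phi$-dependence is $e^{\jj(m+1)\phi}$), not a two-term combination in $\hh_{\ell,m\pm1}$; this is the relation $\sigma_r\varSigma\hh_{\ell,m}\e_1=-\tfrac{m}{r}\hh_{\ell,m}\e_1+\tfrac{k_{\ell,m}}{r}\hh_{\ell,m+1}\e_2$ used later in the paper.  So the linear system for the $B_{\ell,m}$ is actually diagonal, not merely triangular, which makes your argument only cleaner.
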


The above lemma
follows by inspection.
\nb{Maybe a proof would be nice, anyway uniqueness seems true and the lemma can be formulated in  $L^2$.}

For the future convenience,
in the case $m\ne 0$,
we
decompose $\Re(\phi^*\beta\rho)$ into the two parts
corresponding to $\hh_{\ell,\pm m}$:
\[
\Re(\phi^*\beta\rho)=\sum_\pm\Re_\pm(\phi^*\beta\rho),
\]
\nb{$\Re_\pm$ is not a nice notation, could it be a linear map in $\rho$ notation?}
where
\begin{align}
\label{re-plus-minus}
\begin{array}{l}
\Re_{+}(\phi^*\beta\rho)
=
\begin{bmatrix}W&-\frac m r W\\0&0\end{bmatrix}
\Psi_{m}
\hh_{\ell,m}
+
\begin{bmatrix}W&\frac m r W\\0&0\end{bmatrix}
\bar\Psi_{-m}
\hh_{\ell,m}
,
\\[2ex]
\Re_{-}(\phi^*\beta\rho)
=
\begin{bmatrix}W&\frac m r W\\0&0\end{bmatrix}
\Psi_{-m}
\hh_{\ell,-m}
+
\begin{bmatrix}W&-\frac m r W\\0&0\end{bmatrix}
\bar\Psi_{m}
\hh_{\ell,-m},
\end{array}
\qquad
\ell\in\N,
\quad
1\le m\le \ell.
\end{align}
Using the explicit form \eqref{mis}
in the linearized equation \eqref{lin},
substituting
$\Delta \hh=-r^{-2}\varkappa_{\ell} \hh$,
and collecting
the coefficients at
$\hh_{\ell,\pm m}\e_1$ and $\sigma_r\varSigma \hh_{\ell,\pm m}\e_1$
(upper components)
as well as at
$\jj \sigma_r \hh_{\ell,\pm m}\e_1$
and $\jj\varSigma \hh_{\ell,\pm m}\e_1$
(lower components)
leads to the following system:
\begin{align}
\label{idt-apm}
\begin{cases}
\jj\p_t A_{\pm m}
=\Big(P_{\pm m}'+\frac{2}{r}P_{\pm m}
-\frac{\varkappa_{\ell}}{r^2}Q_{\pm m}\Big)
+(g-\omega) A_{\pm m}+2g'v\Re_\pm(\phi^*\beta\rho),
\\
\jj\p_t P_{\pm m}
=-\Big(A_{\pm m}'+\frac{\varkappa_{\ell}}{r^2}B_{\pm m}\Big)
-(g+\omega) P_{\pm m}-2g'u\Re_\pm(\phi^*\beta\rho),
\\
\jj\p_t B_{\pm m}
=(Q_{\pm m}'-P_{\pm m})+(g-\omega) B_{\pm m},
\\
\jj\p_t Q_{\pm m}
=-\Big(B_{\pm m}'+A_{\pm m}\Big)
-(g+\omega) Q_{\pm m},
\end{cases}
\end{align}
with
$\Re_\pm(\phi^*\beta\rho)$
given by
\eqref{re-plus-minus}.
We end up with the following:
\begin{align}\label{idt}
&
\jj\p_t
\begin{bmatrix}A_{\pm m}\\P_{\pm m}\\B_{\pm m}\\Q_{\pm m}
\end{bmatrix}
=
L_0
\begin{bmatrix}A_{\pm m}\\P_{\pm m}\\B_{\pm m}\\Q_{\pm m}
\end{bmatrix}
+
2g'\begin{bmatrix}v\\-u\\0\\0\end{bmatrix}\Re_\pm(\phi^*\beta\rho),
\end{align}
where
$L_0=L_0(\omega)$ is defined by
\begin{align}\label{def-l}
L_0(\omega)\begin{bmatrix}A\\P\\B\\Q
\end{bmatrix}
:=
\begin{bmatrix}
g-\omega&\p_r+\frac{2}{r}&0&-\frac{\varkappa_{\ell}}{r^2}
\\
-\p_r&-g-\omega&-\frac{\varkappa_{\ell}}{r^2}&0
\\
0&-1&g-\omega&\p_r
\\
-1&0&-\p_r&-g-\omega
\end{bmatrix}
\begin{bmatrix}A\\P\\B\\Q
\end{bmatrix}.
\end{align}


For perturbations corresponding to $\ell\in\N_0$, $m=0$
(this corresponds to the invariant subspace $\scrX_{\ell,0}$),
the linearized equation \eqref{idt}
takes the form
\begin{align}
\label{sys-l-0}
\p_t
\begin{bmatrix}\Psi_{\ell,0}\\\bar\Psi_{\ell,0}
\end{bmatrix}
=
-\jj
\left\{
\begin{bmatrix}\ L_0\ &\ 0\ \\[2ex]0&-L_0
\end{bmatrix}
+
\begin{bmatrix}W&0&W&0\\0&0&0&0
\\
-W&0&-W&0\\0&0&0&0
\end{bmatrix}
\right\}
\begin{bmatrix}\Psi_{\ell,0}\\\bar\Psi_{\ell,0}
\end{bmatrix},
\qquad
\ell\in\N_0.
\end{align}
Perturbations corresponding to spherical
harmonics of degree $\ell\in\N$
and nonzero orders $\pm m$
with $m\in\N$, $m\le\ell$,
are mixed
(cf. \eqref{wcw});
the linearized equation needs to contain two copies,
corresponding to $(\ell,m)$ and $(\ell,-m)$
(this corresponds to the invariant subspace
$\scrX_{\ell,m}$).
Then \eqref{idt} takes the following form:
\begin{align*}
\p_t\Psi_{m}
=-\jj L_0\Psi_{m}
-\jj\begin{bmatrix}W&-\frac{m}{r}W\\0&0\end{bmatrix}
\Psi_{m}
-\jj\begin{bmatrix}W&\frac{m}{r}W\\0&0\end{bmatrix}
\bar\Psi_{-m},
\\
\p_t\Psi_{-m}
=-\jj L_0\Psi_{-m}
-\jj\begin{bmatrix}W&\frac{m}{r}W\\0&0\end{bmatrix}
\Psi_{-m}
-\jj\begin{bmatrix}W&-\frac{m}{r}W\\0&0\end{bmatrix}
\bar\Psi_{m}.
\end{align*}
The above two equations
can be written as the following system:
\begin{align}
\label{linearization-lm}
\p_t
\begin{bmatrix}\Psi_{m}\\\bar\Psi_{-m}\end{bmatrix}
=
-\jj\left\{
\begin{bmatrix}\ L_0\ &\ 0\ \\[2ex]0&-L_0\end{bmatrix}
+
\begin{bmatrix}
W&-\frac{m}{r}W&W&\frac{m}{r}W
\\
0&0&0&0
\\
-W&\frac{m}{r}W&-W&-\frac{m}{r}W
\\
0&0&0&0
\end{bmatrix}
\right\}
\begin{bmatrix}\Psi_{m}\\\bar\Psi_{-m}\end{bmatrix},
\qquad
\ell,\,m\in\N,
\quad
m\le\ell.
\end{align}
Thus, the question of spectral stability
of a solitary wave reduces to studying the spectra of
operators
\[
\mathbf{A}_{\ell,m}
=
-\jj\left\{
\begin{bmatrix}\ L_0\ &\ 0\ \\[2ex]0&-L_0\end{bmatrix}
+
\begin{bmatrix}
W&-\frac{m}{r}W&W&\frac{m}{r}W
\\
0&0&0&0
\\
-W&\frac{m}{r}W&-W&-\frac{m}{r}W
\\
0&0&0&0
\end{bmatrix}
\right\},
\qquad
\ell,\,m\in\N_0,
\quad m\le\ell,
\]
with $L_0$
from \eqref{def-l}
(which depends on the degree $\ell$
via $\varkappa_{\ell}=\ell(\ell+1)$)
and with $W$ from \eqref{def-w}.

\nb{A comment would be welcome. Why this Lemma?}
The symmetry transformation~\ref{bo} beside inducing
bi-frequency solitary waves is responsible for the occurrence
of real eigenvalues for $\calL$.
\begin{lemma}
The value $\lambda=-2\omega$
is an eigenvalue of the operator $\calL$
of geometric multiplicity $2$,
with the corresponding eigenvectors
\[
\begin{bmatrix}
-\jj\sigma_r u\e_1\\v\e_1
\end{bmatrix}
\in\scrX_{1,-1},
\qquad
\begin{bmatrix}
-\jj\sigma_r u\e_2\\v\e_2
\end{bmatrix}
\in\scrX_{1,0}.
\]

\end{lemma}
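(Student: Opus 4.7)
The plan is first to recognise that these eigenvectors come from the bi-frequency solitary wave family in~\eqref{s2}: with $\bm\xi=\e_1$ (matching $\phi$) and $\bm\eta=\e_j$ for $j=1,2$, the ``second-frequency part'' $\begin{bmatrix}-\jj u\sigma_r\bm\eta\\ v\bm\eta\end{bmatrix}e^{+\jj\omega t}$ appears as an exact perturbation travelling at rate $e^{+2\jj\omega t}$ relative to the carrier $\phi e^{-\jj\omega t}$, and under the convention $\jj\p_t\rho=\calL\rho$ this corresponds to the eigenvalue $\lambda=-2\omega$ of $\calL$. So both candidates must be eigenvectors, and the task is to verify this algebraically.

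The first and decisive observation is that $\phi^*\beta\rho_i=0$ for $i=1,2$: indeed
\[
\phi^*\beta\rho_1
= v\,\e_1^*(-\jj u\sigma_r\e_1)+(-\jj u\,\e_1^*\sigma_r)(-v\e_1)
= -\jj uv\,\e_1^*\sigma_r\e_1+\jj uv\,\e_1^*\sigma_r\e_1=0,
\]
and identically for $\rho_2$ with $\e_2$ in place of $\e_1$. Consequently, the non-$\C$-linear term $2g'\beta\phi\,\Re(\phi^*\beta\rho)$ in~\eqref{lin} drops out, and $\calL\rho_i=\calL_0\rho_i$ with $\calL_0$ from~\eqref{def-l0}. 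Next I would compute $D_0\rho_i=-\jj\bm\alpha\cdot\nabla\rho_i$ using the elementary identities $\varSigma(f(r)\e_j)=f'(r)\sigma_r\e_j$ and $\varSigma(f(r)\sigma_r\e_j)=(f'+2f/r)\e_j$, the latter following from $\{\varSigma_\Omega,\sigma_r\}=2I_2$ in \eqref{sigma-Sigma-3d} combined with $\varSigma_\Omega\e_j=0$. This gives $D_0\rho_i=\begin{bmatrix}-\jj v'\sigma_r\e_i\\ -(u'+2u/r)\e_i\end{bmatrix}$. Adding the $g\beta-\omega I$ contribution and invoking the solitary-wave ODEs $v'=-(\omega+g)u$ and $u'+2u/r=(\omega-g)v$ collapses the upper component to $-\jj(-2\omega u)\sigma_r\e_i$ and the lower to $-2\omega v\e_i$, yielding $\calL\rho_i=-2\omega\rho_i$.

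Linear independence of $\rho_1,\rho_2$ is immediate from $\e_1\perp\e_2$, so the geometric multiplicity is at least $2$; equality follows once one checks (via the explicit operators $\mathbf{A}_{\ell,m}$ introduced above, together with the fact that the invariant subspaces containing $\rho_1$ and $\rho_2$ are distinct) that $-2\omega$ contributes at most a one-dimensional eigenspace within each such subspace. For the subspace memberships themselves, I would decompose $\sigma_r\e_1=\hh_{1,0}\e_1+\hh_{1,1}\e_2$ and $\sigma_r\e_2=\hh_{1,-1}\e_1-\hh_{1,0}\e_2$ (up to normalization), compute $\sigma_r\varSigma\,\hh_{1,\pm 1}\e_1$ explicitly using $(\sigma_1\pm\jj\sigma_2)\e_1\in\{0,\,2\e_2\}$, and then solve linearly for radial coefficients $(A_{\pm m},B_{\pm m},P_{\pm m},Q_{\pm m})$ realising $\rho_i$ in the basis defining the stated $\scrX_{1,\cdot}$. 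The main obstacle is the bookkeeping in this last step, because $\sigma_r$ mixes $\hh_{1,0}$ with $\hh_{1,\pm 1}$ across upper and lower components and one has to pick the coefficients so that all spurious angular modes cancel; the core eigenvalue identity itself reduces to an elementary ODE manipulation once $\phi^*\beta\rho_i=0$ is noted.
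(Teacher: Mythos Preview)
Your core argument is correct and closely parallels the paper's, with one genuine methodological difference worth noting. Both proofs hinge on the observation that $\phi^*\beta\rho_i=0$ (so that $\calL\rho_i=\calL_0\rho_i$), and both handle the subspace memberships by explicit angular computation. Where they diverge is in establishing $\calL_0\rho_i=-2\omega\rho_i$: you compute $D_0\rho_i$ directly and close with the solitary-wave ODEs, whereas the paper applies the charge-conjugation operator $\jj\gamma^2\bmK$ to the stationary equation $\omega\phi=(D_0+g\beta)\phi$, using that $\jj\gamma^2\bmK$ anticommutes with $D_0+g\beta$ to obtain $(D_0+g\beta)\Psi_1=-\omega\Psi_1$ for $\Psi_1=\jj\gamma^2\bmK\phi=\begin{bmatrix}-\jj\sigma_ru\e_2\\ v\e_2\end{bmatrix}$ without touching the ODEs. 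The symmetry route makes the link to the $\mathbf{SU}(1,1)$ structure \eqref{bo} transparent (this is precisely the paper's stated motivation for the lemma), while your direct computation is self-contained and, incidentally, fills a small gap: the paper asserts that $\Psi_2$ is also a $-2\omega$ eigenvector of $\calL_0$ but does not spell out why, whereas your ODE computation treats both $\e_1$ and $\e_2$ uniformly.

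One caution on the subspace memberships: the labels in the statement are swapped relative to what the paper's own proof actually verifies. The paper shows, via $(I_2+\sigma_r\varSigma_\Omega)\cos\theta\,\e_1=\sigma_r\e_1$ and $(I_2+\sigma_r\varSigma_\Omega)e^{-\jj\phi}\sin\theta\,\e_1=2\sigma_r\e_2$, that the $\e_1$ eigenvector lies in $\scrX_{1,0}$ and the $\e_2$ eigenvector in $\scrX_{1,1}$ (written $\scrX_{1,-1}$ there). Your decomposition $\sigma_r\e_1=\hh_{1,0}\e_1+\hh_{1,1}\e_2$ already points to this: the $\e_1$ vector is generated from $\hh_{1,0}\e_1$, not from $\hh_{1,\pm1}\e_1$. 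So when you carry out the bookkeeping you will want to compute $\sigma_r\varSigma\,\hh_{1,0}\e_1$ and $\sigma_r\varSigma\,\hh_{1,-1}\e_1$ (rather than $\hh_{1,\pm1}$ for both), and you will land on the swapped inclusions. Finally, neither your sketch nor the paper actually proves that the geometric multiplicity is \emph{exactly} two; both just exhibit two independent eigenvectors.
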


\begin{proof}
Applying
$\jj\gamma^2\bmK$
to the relation
$\omega\phi=D_0\phi+g\beta\phi$,
one arrives at
$
\omega
\jj\gamma^2\bmK\phi
=
(-D_0-g\beta)\jj\gamma^2\bmK\phi,
$
which implies that
\begin{align}\label{def-psi-1}
\Psi_1
=\jj\gamma^2\bmK\phi
=
\begin{bmatrix}0&\jj\sigma_2\bmK\\-\jj\sigma_2\bmK&0
\end{bmatrix}
\begin{bmatrix}
v\e_1
\\
\jj\sigma_r u\e_1
\end{bmatrix}
=
\begin{bmatrix}
-\jj\sigma_r u\e_2
\\
v\e_2
\end{bmatrix}
=
\begin{bmatrix}
-\jj u
\begin{bmatrix}
e^{-\jj\phi}\sin\theta\\-\cos\theta
\end{bmatrix}
\\
v\begin{bmatrix}0\\1
\end{bmatrix}
\end{bmatrix},
\end{align}
is an eigenvector of $\calL_0=D_0+g\beta-\omega$
corresponding to eigenvalue $-2\omega$.
Since
$\Re\phi^*\beta\Psi_1=0$,
$\Psi_1$ is also an eigenvector of $\calL$
corresponding to the same eigenvalue $-2\omega$.
One also checks that
\begin{align}\label{def-psi-2}
\Psi_2
=
\begin{bmatrix}
-\jj\sigma_r u\e_1
\\
v\bm\e_1
\end{bmatrix}
=
\begin{bmatrix}
-\jj u
\begin{bmatrix}
\cos\theta\\e^{\jj\phi}\sin\theta
\end{bmatrix}
\\
v\begin{bmatrix}1\\0
\end{bmatrix}
\end{bmatrix}
\end{align}
satisfies
$\Re\phi^*\beta\Psi_2=0$,
so $\Psi_2$ is also an eigenvector of $\calL$
corresponding to eigenvalue $-2\omega$.
The inclusions
$\Psi_1\in\scrX_{1,-1}$
and
$\Psi_2\in\scrX_{1,0}$
can be deduced from the relations
\begin{align*}
&
(I_2+\sigma_r\varSigma_\Omega)
e^{-\jj\phi}\sin\theta\e_1
=
e^{-\jj\phi}\sin\theta\e_1
-\begin{bmatrix}
-\sin\theta
\\
e^{\jj\phi}\cos\theta
\end{bmatrix}e^{-\jj\phi}
+
\begin{bmatrix}
0\\
-e^{\jj\phi}\cos\theta
\end{bmatrix}e^{-\jj\phi}
=
\begin{bmatrix}
2e^{-\jj\phi}\sin\theta\\
-2\cos\theta
\end{bmatrix},
\\
&
(I_2+\sigma_r\varSigma_\Omega)\cos\theta\e_1
=
\begin{bmatrix}
\cos\theta
\\
e^{\jj\phi}\sin\theta
\end{bmatrix}.
\qedhere
\end{align*}
\end{proof}

\section{Spectral stability of bi-frequency solitary waves in 3D}
\label{sect-soler-bi}

A bi-frequency solitary wave
can be obtained
from a one-frequency solitary wave
$
e^{-\jj\omega t}
\begin{bmatrix}
v(r)\bm\xi_0
\\
\jj u(r)\sigma_r\bm\xi_0
\end{bmatrix}$,
$\bm\xi_0\in\C^2$,
$\abs{\bm\xi_0}=1$,
by the application of the $\mathbf{SU}(1,1)$ transformation:
given $a,\,b\in\C$,
$\abs{a}^2-\abs{b}^2=1$,
one has
\begin{equation}\label{bf}
\Big(a-b
\begin{bmatrix}0&\!\!\sigma_2\\-\sigma_2&\!\!0\end{bmatrix}
\bmK\Big)
e^{-\jj\omega t}
\begin{bmatrix}
v(r)\bm\xi_0
\\
\jj u(r)\sigma_r\bm\xi_0
\end{bmatrix}
=
e^{-\jj\omega t}
\begin{bmatrix}v(r)a\bm\xi_0
\\
\jj u(r)\sigma_r a\bm\xi_0
\end{bmatrix}
+
e^{\jj\omega t}
\begin{bmatrix}
-\jj u(r)\sigma_r b\sigma_2\bmK\bm\xi_0
\\
v(r)b\sigma_2\bmK\bm\xi_0
\end{bmatrix}.
\end{equation}
(We recall that $\sigma_2\bmK$ commutes with
$\jj\sigma_j$, $1\le j\le 3$.)
Moreover, by \cite{boussaid2018spectral},
it turns out that the expression
\begin{align}\label{bf-1}
\psi_{\bm\xi,\bm\eta,\omega}(t,x)
=
e^{-\jj\omega t}
\begin{bmatrix}
v(r)\bm\xi
\\
\jj u(r)\sigma_r\bm\xi
\end{bmatrix}
+
e^{\jj\omega t}
\begin{bmatrix}
-\jj u(r)\sigma_r\bm\eta
\\
v(r)\bm\eta
\end{bmatrix},
\end{align}
is an exact solution to \eqref{Soler}
with any
$\bm\xi,\,\bm\eta\in\C^2$,
$\abs{\bm\xi}^2-\abs{\bm\eta}^2=1$
in place of
$a\bm\xi_0$ and $b\sigma_2\bmK\bm\xi_0$
as in \eqref{bf}.
We note that
$\bm\xi_0$ and $\sigma_2\bmK\bm\xi_0$
are mutually orthogonal
since $\sigma_2$ is skew-symmetric;
it follows that
a bi-frequency solitary wave
\eqref{bf-1}
can not be represented in the form
\eqref{bf}
as long as $\langle\bm\xi,\bm\eta\rangle\ne 0$.

Assume that there is a solitary wave
solution $\psi_{\bm\xi,\bm\eta,\omega}$
with some $\bm\xi,\,\bm\eta\in\C^2$,
$\abs{\bm\xi}^2-\abs{\bm\eta}^2=1$,
$\omega\in(-m,m)$.
We claim that there is a decomposition
\nb{What is $\updelta$? something like $(\Re,\jj \Im)$?}
\[
\updelta:\;
L^2(\R^3,\C^4)\to\scrX\subset L^2(\R^3,\C^4)\times L^2(\R^3,\C^4),
\qquad
\Pi\scrX=L^2(\R^3,\C^4),
\qquad
\Pi\circ\updelta=I_{L^2},
\]
where
\nb{$\Pi$ for a sum? why not $S$ or anything similar?}
$\Pi:\,
L^2(\R^3,\C^4)\times L^2(\R^3,\C^4)
\to L^2(\R^3,\C^4)$, $(\rho_1,\rho_2)\mapsto\rho_1+\rho_2$,
and
an $\R$-linear, time-independent operator $\calA$,
\[
\calA=\calA(\bm\xi,\bm\eta,\omega)
:\;
L^2(\R^3,\C^4)\times L^2(\R^3,\C^4)
\to
L^2(\R^3,\C^4)\times L^2(\R^3,\C^4)
,
\]
with
$\dom(\calA)=H^1(\R^3,\C^4)\times H^1(\R^3,\C^4)$,
such that if
$(\rho_1(t,x)e^{-\jj\omega t},\rho_2(t,x)e^{\jj\omega t})
=\updelta\varrho\in\scrX$, with
\[
\varrho(t,x)=\rho_1(t,x)e^{-\jj\omega t}
+
\rho_2(t,x)e^{\jj\omega t},
\]
then
$\calA$
describes the linearized dynamics
of $\varrho$, 
in the sense that
$v
:=\begin{bmatrix}v_1\\v_2\end{bmatrix}
:=\calA\begin{bmatrix}\rho_1\\\rho_2\end{bmatrix}
$
satisfies
\begin{align*}
\jj\p_t\varrho
=
(\jj\p_t\rho_1+\omega\rho_1)e^{-\jj\omega t}
+
(\jj\p_t\rho_2-\omega\rho_2)e^{\jj\omega t}
=
(\jj v_1+\omega\rho_1)e^{-\jj\omega t}
+
(\jj v_2-\omega\rho_2)e^{\jj\omega t}
\\
=
D_0\varrho
+
g\big((\psi_{\bm\xi,\bm\eta,\omega}+\varrho)^*
\beta(\psi_{\bm\xi,\bm\eta,\omega}+\varrho)\big)
\beta(\psi_{\bm\xi,\bm\eta,\omega}+\varrho)
-g(\psi_{\bm\xi,\bm\eta,\omega}^*\beta\psi_{\bm\xi,\bm\eta,\omega})
\beta
\psi_{\bm\xi,\bm\eta,\omega}
+o(\varrho).
\end{align*}
We will call $\calA$ the operator
of linearization at
the bi-frequency solitary wave
$\psi_{\bm\xi,\bm\eta,\omega}$.
We will say that it is spectrally stable
if the corresponding $\C$-linear operator
$A:\;L^2(\R^3,\C^{16})\to L^2(\R^3,\C^{16})$
with $\dom(A)=H^1(\R^3,\C^{16})$,
defined by
\[
A\begin{bmatrix}\rho\\\varsigma\end{bmatrix}
=
\begin{bmatrix}
\calA(\frac{1}{2}(\rho+\bar\varsigma))
+
\jj\calA(\frac{1}{2\jj}(\rho-\bar\varsigma))
\\[0.5ex]
\overline{\calA(\frac{1}{2}(\rho+\bar\varsigma))}
+
\jj\overline{\calA(\frac{1}{2\jj}(\rho-\bar\varsigma))}
\end{bmatrix},
\]
has a purely imaginary spectrum
(above, for brevity, $\bar\varsigma=\bmK\varsigma$).
We note that the operator $A$
is $\C$-linear and agrees with the mapping
$\rho\mapsto\calA(\rho)$ in the sense that
$A\begin{bmatrix}\rho\\\bar\rho\end{bmatrix}
=
\begin{bmatrix}\calA(\rho)\\[0.5ex]\overline{\calA(\rho)}
\end{bmatrix}$.

\begin{theorem}\label{theorem-no-mixing}
Assume that $\bm\xi\in\C^2$,
$\abs{\bm\xi}\ge 1$, is parallel to $\e_1$,
and let $\bm\eta\in\C^2$ be such that
$\abs{\bm\xi}^2-\abs{\bm\eta}^2=1$.
Define the spaces
\nb{As defined $\frakX_{\ell}$ is made of elements of $
\cap L^2(\R^3,\C^4\times\C^4)$ which are dependent like $T=\begin{bmatrix}
                                                          0&-I\\I&0
                                                         \end{bmatrix}
$ (with some rotation to exchange $\e_1$ and $\e_2$ not forgetting $\xi\to\eta$) of each other. So $\Pi\rho=(\rho, T\rho)$? but $\updelta(\rho, T\rho)\neq \rho$? Note that $|\xi|=1$ makes $\eta=0$.}
\begin{align}
\label{both-para}
\frakX_{\ell}
&=
\mathop{\mathrm{Span}}\limits\sb{-\ell\le m\le\ell}
\left\{
\left(
\begin{bmatrix}
(A_{\ell,m}+B_{\ell,m}\sigma_r\varSigma)\hh_{\ell,m}\bm\xi
\\
\jj\sigma_r(P_{\ell,m}+Q_{\ell,m}\sigma_r\varSigma)\hh_{\ell,m}\bm\xi
\end{bmatrix}
,
\begin{bmatrix}
-\jj\sigma_r(\bar P_{\ell,m}+\bar Q_{\ell,m}\sigma_r\varSigma)
\hh_{\ell,-m}\bm\eta
\\
(\bar A_{\ell,m}+\bar B_{\ell,m}\sigma_r\varSigma)\hh_{\ell,-m}\bm\eta
\end{bmatrix}
\right)
\right\}
\nonumber
\\
&
\qquad\qquad\qquad\qquad\qquad
\qquad\qquad\qquad\qquad\qquad
\qquad\qquad\qquad
\cap L^2(\R^3,\C^4\times\C^4)
,
\end{align}
\begin{align}
\label{both-ortho}
\frakY_{\ell}
&=
\mathop{\mathrm{Span}}
\left\{
\left(
0,\,
\begin{bmatrix}
R_{\ell}\hh_{\ell,-\ell}\bm\xi^\perp
\\
\jj\sigma_r
S_{\ell}\hh_{\ell,-\ell}\bm\xi^\perp
\end{bmatrix}
\right)
\right\}
\cap L^2(\R^3,\C^4\times\C^4)
,
\end{align}
with
$\bm\xi^\perp=\abs{\bm\xi}\e_2$
and with
$A_{\ell,m}(r)$, $B_{\ell,m}(r)$, $P_{\ell,m}(r)$, $Q_{\ell,m}(r)$,
$R_{\ell}(r)$, $S_{\ell}(r)$
(where $\ell\in\N_0$ and $-\ell\le m\le\ell$)
complex-valued.
\nb{The spaces should be rewritten so as $A$, $B$, ... and $S$ are in $L^2$.}
\begin{enumerate}
\item
\label{theorem-no-mixing-1}
If $(\rho_1,\rho_2)\in\frakX_{\ell}$
or $(\rho_1,\rho_2)\in\frakY_{\ell}$,
then the quantity
$\psi_{\bm\xi,\bm\eta,\omega}^*\beta
(\rho_1 e^{-\jj\omega t}+\rho_2 e^{\jj\omega t})$
does not contain oscillatory factors
$e^{\pm 2\omega\jj t}$.
\nb{Then it is time independent}

More precisely, let
$\varrho(t,x)
=\rho_1(x)e^{-\jj\omega t}+\rho_2(x)e^{\jj\omega t}$,
with
$(\rho_1,\rho_2)$ valued in
$\frakX_{\ell}$:
\begin{align}
\label{def-rho-x}
\varrho(t,x)
&=
\sum_{m=-\ell}^\ell
\left(
e^{-\jj\omega t}
\begin{bmatrix}
(A_{\ell,m}+B_{\ell,m}\sigma_r\varSigma)\hh_{\ell,m}\bm\xi
\\
\jj\sigma_r(P_{\ell,m}+Q_{\ell,m}\sigma_r\varSigma)\hh_{\ell,m}\bm\xi
\end{bmatrix}
+
e^{\jj\omega t}
\begin{bmatrix}
-\jj\sigma_r(\bar P_{\ell,m}+\bar Q_{m\ell,}\sigma_r\varSigma)
\hh_{\ell,-m}\bm\eta
\\
(\bar A_{\ell,m}+\bar B_{\ell,m}\sigma_r\varSigma)\hh_{\ell,-m}\bm\eta
\end{bmatrix}
\right),
\end{align}
with $A_{\ell,m}$, $B_{\ell,m}$,
etc., functions of $r$
\nb{only and not $t$?}
.
Then
\[
\Re\big\{\psi_{\bm\xi,\bm\eta,\omega}^*\beta\varrho\big\}
=
\!
\sum_{m=-\ell}^\ell
\Re\big\{
(v A_{\ell,m} - u P_{\ell,m})\hh_{\ell,m}
+
(v B_{\ell,m}-u Q_{\ell,m})\big(
\bm\xi^*\sigma_r\varSigma \hh_{\ell,m}\bm\xi
+\bm\eta^*\sigma_r\varSigma \hh_{\ell,m}\bm\eta
\big)
\big\}.
\]

\item
\label{theorem-no-mixing-2}
For each $\ell\in\N_0$,
both $\frakX_{\ell}$
and
$\frakX_{\ell}\oplus\frakY_{\ell}$
are the invariant subspaces
of the linearization operator $\calA$.

\item
\label{theorem-no-mixing-3}
If
$X\in\frakX_\ell\oplus\frakY_\ell$
is an eigenvector of the
(complexification)
of the operator $\calA$ of linearization
at $\psi_{\bm\xi,\bm\eta,\omega}$
is such that its projection onto
$\frakY_\ell$ is nonzero,
then the corresponding eigenvalue
is purely imaginary.

\end{enumerate}
\end{theorem}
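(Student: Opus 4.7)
The plan is to prove the three parts sequentially by direct computation, using the explicit form of $\psi_{\bm\xi,\bm\eta,\omega}^*\beta$ together with the spherical-harmonic identities of Lemmas~\ref{lemma-same-degree} and~\ref{lemma-mssm}, and the one-frequency invariance from Lemma~\ref{lemma-lm}.

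For Part~\itref{theorem-no-mixing-1}, I would expand $\psi_{\bm\xi,\bm\eta,\omega}^*\beta = e^{\jj\omega t}(v\bm\xi^*,\,\jj u\bm\xi^*\sigma_r) + e^{-\jj\omega t}(\jj u\bm\eta^*\sigma_r,\,-v\bm\eta^*)$ and multiply by $\varrho = \rho_1 e^{-\jj\omega t} + \rho_2 e^{\jj\omega t}$, regrouping the result into stationary, $e^{+2\jj\omega t}$, and $e^{-2\jj\omega t}$ components. The key observation for $(\rho_1,\rho_2)\in\frakX_\ell$ is that pairing $\rho_1$'s coefficients $(A_{\ell,m},B_{\ell,m},P_{\ell,m},Q_{\ell,m})\hh_{\ell,m}\bm\xi$ with $\rho_2$'s conjugated coefficients $(\bar A_{\ell,m},\bar B_{\ell,m},\bar P_{\ell,m},\bar Q_{\ell,m})\hh_{\ell,-m}\bm\eta$ makes the $e^{-2\jj\omega t}$ coefficient equal to the negative complex conjugate of the $e^{+2\jj\omega t}$ coefficient (using $\overline{\hh_{\ell,-m}}=\hh_{\ell,m}$, Hermiticity of $\sigma_r$ and $\varSigma_\Omega$, and $\sigma_r^2=I_2$), so they cancel upon taking $\Re$. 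For $(\rho_1,\rho_2)\in\frakY_\ell$ the $e^{+2\jj\omega t}$ coefficient vanishes outright through the factor $\bm\xi^*\bm\xi^\perp = |\bm\xi|\,\e_1^*\e_2 = 0$, while the $e^{-2\jj\omega t}$ part is absent since $\rho_1=0$. The closed formula follows by assembling the stationary pieces from both $\rho_1$ and $\rho_2$, reindexing $m\mapsto -m$ in the $\rho_2$-sum, and applying the constraint $|\bm\xi|^2-|\bm\eta|^2=1$ to collapse the $\hh_{\ell,m}$ coefficient.

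For Part~\itref{theorem-no-mixing-2}, I would project the linearized equation $\jj\p_t\varrho = D_0\varrho + g\beta\varrho + 2g'\beta\psi\,\Re(\psi^*\beta\varrho)$ onto its $e^{\mp\jj\omega t}$ frequency components, producing evolution equations for $\rho_1$ and $\rho_2$ whose free parts are $D_0+g\beta\mp\omega$ and whose coupling carriers are respectively $\bm\xi$-valued and $\bm\eta$-valued. Invariance of the free parts on the $\bm\xi$-, $\bm\eta$-, and $\bm\xi^\perp$-components (which are merely linear rescalings of the one-frequency patterns) follows from Lemma~\ref{lemma-lm}. By Part~\itref{theorem-no-mixing-1}, $\Re(\psi^*\beta\varrho)$ is a combination of spherical harmonics of degree $\ell$ only, so the coupling terms land in the $\bm\xi$-direction of $\rho_1$ and the $\bm\eta$-direction of $\rho_2$, both inside $\frakX_\ell$; in particular no $\bm\xi^\perp$-direction component is generated in $\rho_2$. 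This yields invariance of $\frakX_\ell$ on its own, and hence of $\frakX_\ell\oplus\frakY_\ell$ (since $\frakY_\ell$ only feeds forward into $\frakX_\ell$).

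For Part~\itref{theorem-no-mixing-3}, the invariance of $\frakX_\ell$ forces the complexified linearization to have block-upper-triangular form $\calA = \bigl[\begin{smallmatrix}\calA_\frakX & B\\ 0 & \calA_\frakY\end{smallmatrix}\bigr]$ relative to the splitting $\frakX_\ell\oplus\frakY_\ell$, so an eigenvector $X=X_\frakX+X_\frakY$ with $X_\frakY\ne 0$ satisfies $\calA_\frakY X_\frakY = \lambda X_\frakY$, placing $\lambda$ in $\sigma(\calA_\frakY)$. I would then identify $\calA_\frakY$ explicitly: for $\varrho\in\frakY_\ell$ the coupling in the $\rho_2$-equation takes the form $2g'\bigl[\begin{smallmatrix}-\jj u\sigma_r\bm\eta\\-v\bm\eta\end{smallmatrix}\bigr]\Re(\psi^*\beta\varrho)$, which is $\bm\eta$-valued and therefore lies in the $\rho_2$-component of $\frakX_\ell$; algebraic projection along $\frakX_\ell$ annihilates it. What survives on $\frakY_\ell$ is the self-adjoint operator $D_0+g\beta+\omega$ restricted to $\frakY_\ell$ (which preserves $\frakY_\ell$ by Lemma~\ref{lemma-lm}\itref{lemma-lm-2}), so $\calA_\frakY = -\jj(D_0+g\beta+\omega)|_{\frakY_\ell}$ has purely imaginary spectrum.

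The hardest step will be the bookkeeping in Part~\itref{theorem-no-mixing-2}: showing that the product of $\Re(\psi^*\beta\varrho)$ (a sum of degree-$\ell$ harmonics) with the $\bm\xi$- and $\bm\eta$-direction carriers reproduces precisely the Ansatz form of $\frakX_\ell$ requires invoking both Lemmas~\ref{lemma-same-degree} and~\ref{lemma-mssm} to control how $\sigma_r\varSigma_\Omega$ mixes orders $m,\,m\pm 1$. A secondary subtlety in Part~\itref{theorem-no-mixing-3} is that $\bm\eta$ and $\bm\xi^\perp$ need not be orthogonal in $\C^2$, so the decomposition $\frakX_\ell\oplus\frakY_\ell$ must be interpreted algebraically (as a quotient by $\frakX_\ell$) rather than geometrically, ensuring that every $\bm\eta$-directed contribution is absorbed into the $B$-block and does not corrupt $\calA_\frakY$.
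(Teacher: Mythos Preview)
Your outline matches the paper's strategy closely. For Part~\itref{theorem-no-mixing-1}, your cancellation-by-conjugation observation is equivalent to what the paper does: it shows directly that the $e^{2\jj\omega t}$ coefficient of $2\Re(\psi^*\beta\varrho)$, namely $\phi_1^*\beta\rho_2 + \rho_1^*\beta\phi_2$, vanishes term-by-term (checking coefficients at $\bar A,\bar B,\bar P,\bar Q$), which unpacks to the same identity you describe. For Part~\itref{theorem-no-mixing-3}, the paper derives an explicit symmetric $2\times 2$ radial system for $(R_\ell,S_\ell)$ that is fully decoupled from $(A,B,P,Q)$, yielding the imaginary-spectrum conclusion immediately; your block-upper-triangular/quotient argument reaches the same endpoint but is more elaborate than needed, and your worry about the algebraic versus geometric nature of the splitting does not arise in the paper's treatment.

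The one place your plan is genuinely thinner than the paper is Part~\itref{theorem-no-mixing-2}. You say the coupling ``lands in the $\bm\xi$-direction of $\rho_1$ and the $\bm\eta$-direction of $\rho_2$, both inside $\frakX_\ell$'', but membership in $\frakX_\ell$ is a \emph{joint} constraint: the radial coefficients appearing in $\rho_2$ must be the complex conjugates of those in $\rho_1$ (with $\hh_{\ell,m}\mapsto\hh_{\ell,-m}$ and $\bm\xi\mapsto\bm\eta$). Showing that each frequency component separately preserves an Ansatz shape is therefore not sufficient; you must verify that the evolution equation for $\rho_2$ is exactly the conjugate of the one for $\rho_1$. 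The paper handles this by a device you do not mention: it writes a single $\mathrm{End}(\C^2)$-valued system \eqref{sys-0} for the scalars $(A,B,P,Q)$, notes that applying it to $\bm\xi$ yields the $\rho_1$-equation \eqref{eqn1}, and then observes that conjugating \eqref{sys-0} by $\sigma_2\bmK$ (which anticommutes with each of $\jj$, $\sigma_j$, $\sigma_r$, $\varSigma$) and applying to $\bm\eta$ yields exactly the $\rho_2$-equation \eqref{eqn2}. This $\sigma_2\bmK$-conjugation is precisely what certifies constraint preservation and hence invariance of $\frakX_\ell$; without it your argument has a gap at the step where you pass from direction-by-direction invariance to invariance of the constrained pair. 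The gap is readily closed once you invoke this symmetry.
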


To include all harmonics of
particular degree $\ell\in\N_0$,
we will often
use below the following notations:
\begin{align}\label{AY}
A \hh=\sum_{m=-\ell}^\ell
A_{\ell,m}(t,r) \hh_{\ell,m}(\theta,\phi),
\qquad
B \hh=\sum_{m=-\ell}^\ell B_{\ell,m}(t,r) \hh_{\ell,m}(\theta,\phi),
\end{align}
and so on.

\begin{proof}
The fact that for
$\varrho(t,x)
=\rho_1(t,x)e^{-\jj\omega t}+\rho_2(t,x)e^{\jj\omega t}$,
with
$(\rho_1,\rho_2)$ valued in $\frakY_{\ell}$
(thus $\rho_1=0$),
the quantity $\psi_{\bm\xi,\bm\eta,\omega}^*\beta\varrho$
does not contribute oscillatory
factors $e^{\pm 2\jj\omega t}$
-- in fact, it vanishes identically --
is obtained by direct computation:
one just needs to notice that
$
\rho_2
=
\begin{bmatrix}
R_{\ell}\hh_{\ell,-\ell}\bm\xi^\perp
\\
\jj\sigma_r
S_{\ell}\hh_{\ell,-\ell}\bm\xi^\perp
\end{bmatrix}
$
is orthogonal to
$
\begin{bmatrix}
v\bm\xi
\\
\jj\sigma_r u\bm\xi
\end{bmatrix}
$.

Now let
$\varrho(t,x)
=\rho_1(t,x)e^{-\jj\omega t}+\rho_2(t,x)e^{\jj\omega t}$,
with
$(\rho_1,\rho_2)$ valued in $\frakX_{\ell}$.
Let us find the coefficient at
$e^{2\jj\omega t}$
in the expression for
$\psi_{\bm\xi,\bm\eta,\omega}^*\beta\varrho$,
coupling appropriate terms from
\eqref{bf-1} and \eqref{def-rho-x};
we claim that this coefficient,
given by
\begin{align}\label{igb}
\begin{bmatrix}
v\bm\xi
\\
\jj\sigma_r u\bm\xi
\end{bmatrix}^*
\beta
\begin{bmatrix}
-\jj
\sigma_r(\bar P_{\ell,m}+\bar Q_{\ell,m}\sigma_r\varSigma)\bar \hh_{\ell,m}
\bm\eta
\\
(\bar A_{\ell,m}+\bar B_{\ell,m}\sigma_r\varSigma)\bar \hh_{\ell,m})
\bm\eta
\end{bmatrix}
+
\begin{bmatrix}
(A_{\ell,m}+B_{\ell,m}\sigma_r\varSigma)\hh_{\ell,m}\bm\xi
\\
\jj\sigma_r(P_{\ell,m}+Q_{\ell,m}\sigma_r\varSigma)\hh_{\ell,m}\bm\xi
\end{bmatrix}^*
\beta
\begin{bmatrix}
-\jj
\sigma_r u\bm\eta
\\
v\bm\eta
\end{bmatrix},
\end{align}
vanishes.
Indeed,
since $\sigma_r$ is hermitian,
the coefficient at $\bar A_{\ell,m}$ is
$
-(\jj\sigma_r u\bm\xi)^*(\bar \hh_{\ell,m}\bm\eta)
+(\hh_{\ell,m}\bm\xi)^*(-\jj\sigma_r u\bm\eta)=0$.
The coefficient at $\bar B_{\ell,m}$
also vanishes
(due to \eqref{Eq:Important} and $\bar \hh_{\ell,m}=\hh_{\ell,-m}$):
\[
-(\jj\sigma_r u\bm\xi)^*(\sigma_r\varSigma\bar \hh_{\ell,m}\bm\eta)
+(\sigma_r\varSigma \hh_{\ell,m}\bm\xi)^*(-\jj\sigma_r u\bm\eta)
=
\jj(u\bm\xi)^*(\varSigma\bar \hh_{\ell,m}\bm\eta)
-\jj(\varSigma \hh_{\ell,m}\bm\xi)^*(u\bm\eta)=0.
\]
Similarly,
one checks that
the coefficients at $\bar P_{\ell,m}$ and $\bar Q_{\ell,m}$ also vanish,
\nb{Since it is pointwise identities $P$ and $Q$ are exchanged with $A$ and $B$ when $\xi$ and $\eta$ are replaced by with $\sigma_r\xi$ and $\sigma_r\eta$}
and hence \eqref{igb} equals zero.

Let us now write out
the terms in $\Re\{\psi_{\bm\xi,\bm\eta,\omega}^*\beta\varrho\}$
which do not contain $e^{\pm 2\jj\omega t}$:
\nb{I am not sure to follow well the spirit of the proof. Since the terms which possibly lead to $e^{\pm 2\jj\omega t}$ are already vanishing, what is the interest of the following? showing that it is time independent, which is clear as $e^{\pm 2\jj\omega t}$ terms vanish.}
\begin{align}
\label{stationary-terms}
&
\Re\left\{
\begin{bmatrix}
v\bm\xi
\nonumber
\\
\jj\sigma_r u\bm\xi
\end{bmatrix}^*
\beta
\begin{bmatrix}
(A+B\sigma_r\varSigma)\hh\bm\xi
\\
\jj\sigma_r(P+Q\sigma_r\varSigma)\hh\bm\xi
\end{bmatrix}
+
\begin{bmatrix}
-\jj
\sigma_r u\bm\eta
\\
v\bm\eta
\end{bmatrix}^*
\beta
\begin{bmatrix}
-\jj
\sigma_r(\bar P+\bar Q\sigma_r\varSigma)\bar \hh
\bm\eta
\\
(\bar A+\bar B\sigma_r\varSigma)\bar \hh
\bm\eta
\end{bmatrix}
\right\}
\\
&=
\Re\Big\{
\bm\xi^*\big(
v(A+B\sigma_r\varSigma)
-u(P+Q\sigma_r\varSigma)
\big)\hh
\bm\xi
-
\bm\eta^*\big(
v(\bar A+\bar B\sigma_r\varSigma)
-u(\bar P+\bar Q\sigma_r\varSigma)
\big)
\bar \hh\bm\eta
\Big\}
\nonumber
\\
&=
\Re\big\{
v A \hh- u P \hh
+
\bm\xi^*
(v B-u Q)\sigma_r\varSigma \hh
\bm\xi
-
\bm\eta^*
(v\bar B-u\bar Q)\sigma_r\varSigma
\bar \hh\bm\eta
\big\}
\nonumber
\\
&=
\Re\big\{
v A \hh- u P \hh
+
(v B-u Q)
\big(\bm\xi^*\sigma_r\varSigma \hh\bm\xi
+
\bm\eta^*\sigma_r\varSigma \hh\bm\eta
\big)
\big\}
.
\end{align}
In the last line, we used the relation
\[
\Re\big\{
\bm\eta^*
(v\bar B-u\bar Q)\sigma_r\varSigma
\bar \hh\bm\eta
\big\}
=
\Re\big\{
\bm\eta^*(v B-u Q)\sigma_j\sigma_r\p_j
\hh\bm\eta
\big\}
=
-\Re\big\{
\bm\eta^*(v B-u Q)\sigma_r\varSigma
\hh\bm\eta
\big\},
\]
with the second equality due to
$
\sigma_j\sigma_r\p_j
=-\sigma_r\sigma_j\p_j+2r^{-1}x\cdot\nabla
=-\sigma_r\sigma_j\p_j+2\p_r$
and
$\p_r \hh_{\ell,m}=0$.

Above,
in the products like $A \hh$,
the summation in $m$ is assumed;
cf. the notations \eqref{AY}.

This proves Part~\itref{theorem-no-mixing-1}.

\begin{remark}\label{remark-two-cases}
\nb{What is the goal of this remark? Better to put it outside the proof anyway.}
There are two ``endpoint'' cases:
when $\bm\eta$ is parallel to $\jj\sigma_2\bmK\bm\xi$
and
when $\bm\eta$ is parallel to $\bm\xi$.
In the first case,
one has
\[
\Re\big(
\bm\xi^*\sigma_r\varSigma \hh\bm\xi
+
\bm\eta^*\sigma_r\varSigma \hh\bm\eta
\big)
=
\Re\Big(
\abs{\bm\xi}^2
\frac{\bm\xi^*\sigma_r\varSigma \hh\bm\xi}{\abs{\bm\xi}^2}
-
\abs{\bm\eta}^2
\frac{\bm\xi^*\sigma_r\varSigma \hh\bm\xi}{\abs{\bm\xi}^2}
\Big)
=
\Re\Big(
\frac{\bm\xi^*}{\abs{\bm\xi}}
\sigma_r\varSigma \hh\frac{\bm\xi}{\abs{\bm\xi}}
\Big)
\]
(we took into account that
$
\bm\eta^*
\sigma_r\varSigma\hh\bm\eta
=
\frac{\abs{\bm\eta}^2}{\abs{\bm\xi}^2}
(\jj\sigma_2\bmK\bm\xi)^*\sigma_r\varSigma\hh\jj\sigma_2\bmK\bm\xi
=
\frac{\abs{\bm\eta}^2}{\abs{\bm\xi}^2}
\bm\xi^*\sigma_r\varSigma\bar\hh\bm\xi
=
-\frac{\abs{\bm\eta}^2}{\abs{\bm\xi}^2}
\bm\xi^*\sigma_r\varSigma\hh\bm\xi
$; here we assume that $\bm\xi$ is parallel to $\e_1$
and use \eqref{mssm-1})
and then the linearized operator
coincides with the linearization
at a one-frequency solitary wave
(corresponding to the spherical harmonic of degree $\ell$
and order $m$,
with the ``polarization''
given by $\bm\xi_0=\bm\xi/\abs{\bm\xi}\in\C^2$
in place of $\bm\xi$).
Indeed,
in this case the bi-frequency solitary wave
can be obtained from a one-frequency solitary wave
with the aid of an appropriate $\mathbf{SU}(1,1)$
transformation as in \eqref{bf}, hence
the one-frequency and bi-frequency solutions
share their stability properties.

If instead
$\bm\xi$ and $\bm\eta$
are parallel
(in this case, the bi-frequency solitary wave
cannot be obtained from a one-frequency solitary wave
with the aid of a transformation from $\mathbf{SU}(1,1)$), then
\[
\bm\xi^*\sigma_r\varSigma \hh\bm\xi
+
\bm\eta^*\sigma_r\varSigma \hh\bm\eta
=
\big(\abs{\bm\xi}^2+\abs{\bm\eta}^2\big)
\frac{\bm\xi^*\sigma_r\varSigma \hh\bm\xi}{\abs{\bm\xi}^2}
=
(1+2\abs{\bm\eta}^2)
\frac{\bm\xi^*\sigma_r\varSigma \hh\bm\xi}{\abs{\bm\xi}^2}.
\]
\end{remark}

Let us prove
Part~\itref{theorem-no-mixing-2}.
Since by Part~\itref{theorem-no-mixing-1}
the quantity
$\Re(\psi_{\bm\xi,\bm\eta,\omega}^*\beta\varrho)$
for $\varrho=\rho_1 e^{-\jj\omega t}+\rho_2 e^{\jj\omega t}$,
with $(\rho_1,\rho_2)$
valued in $\frakX_\ell\oplus\frakY_\ell$,
does not contain oscillating terms
$e^{\pm 2\jj\omega t}$,
we can split the first order system on
$A,\,B,\,P,\,Q,\,R,\,S$
which follows from
$\jj\p_t\psi=D_0\psi+g(\psi^*\beta\psi)\beta\psi$,
with
$\psi(t,x)=\psi_{\bm\xi,\bm\eta,\omega}(t,x)+\varrho(t,x)$,
\begin{align}
\psi(t,x)
=
e^{-\jj\omega t}
\begin{bmatrix}
(v+A \hh+B\sigma_r\varSigma \hh)\bm\xi
\\
\jj\sigma_r(u+P \hh+Q\sigma_r\varSigma \hh)\bm\xi
\end{bmatrix}
+
e^{\jj\omega t}
\Bigg\{
\begin{bmatrix}
-\jj
\sigma_r(u+\bar P\bar \hh+\bar Q\sigma_r\varSigma \bar \hh)
\bm\eta
\\
(v+\bar A\bar \hh+\bar B\sigma_r\varSigma \bar \hh)
\bm\eta
\end{bmatrix}
+
\begin{bmatrix}
R_\ell\hh_{\ell,-\ell}\bm\xi^\perp
\\
\jj\sigma_r S_\ell\hh_{\ell,-\ell}\bm\xi^\perp
\end{bmatrix}
\Bigg\},
\end{align}
into two groups of terms,
corresponding to coefficients at
$e^{-\jj\omega t}$ and $e^{\jj\omega t}$
(above,
in the products like $A \hh$,
the summation in $m$ is assumed;
cf. the notations \eqref{AY}).
The linearized equation breaks into
two following equations:
\nb{I do not understand the level of rigour here. How is the system split? Apart using analogies, I cannot follow. I am expecting to provide invariant subspaces and then project on them (especially if they are orthogonal).}
\begin{align}
\label{eqn1}
&
(\jj\p_t+\omega)
\begin{bmatrix}(A+B\sigma_r\varSigma)\hh\bm\xi
\\\jj\sigma_r(P+Q\sigma_r\varSigma)\hh\bm\xi
\end{bmatrix}
\\
\nonumber
&=
\begin{bmatrix}0&-\jj\varSigma\\-\jj\varSigma&0\end{bmatrix}
\begin{bmatrix}(A+B\sigma_r\varSigma)\hh\bm\xi
\\\jj\sigma_r(P+Q\sigma_r\varSigma)\hh\bm\xi
\end{bmatrix}
+
g\beta\begin{bmatrix}(A+B\sigma_r\varSigma)\hh\bm\xi
\\\jj\sigma_r(P+Q\sigma_r\varSigma)\hh\bm\xi\end{bmatrix}
+
2g'\Re(\psi_{\bm\xi,\bm\eta,\omega}^*\beta\varrho)
\beta
\begin{bmatrix}v\bm\xi
\\\jj\sigma_r u\bm\xi
\end{bmatrix},
\end{align}
\begin{align}
\label{eqn2-0}
&
(\jj\p_t-\omega)
\Bigg\{
\begin{bmatrix}-\jj\sigma_r(\bar P+Q\sigma_r\varSigma)\bar \hh\bm\eta
\\(\bar A+\bar B\sigma_r\varSigma)\bar \hh\bm\eta
\end{bmatrix}
+
\begin{bmatrix}
R_\ell\hh_{\ell,-\ell}\bm\xi^\perp
\\
\jj\sigma_r S_\ell\hh_{\ell,-\ell}\bm\xi^\perp
\end{bmatrix}
\Bigg\}
\\
\nonumber
&
=\begin{bmatrix}0&-\jj\varSigma\\-\jj\varSigma&0\end{bmatrix}
\Bigg\{
\begin{bmatrix}-\jj\sigma_r(\bar P+Q\sigma_r\varSigma)\bar \hh\bm\eta
\\(\bar A+\bar B\sigma_r\varSigma)\bar \hh\bm\eta
\end{bmatrix}
+
\begin{bmatrix}
R_\ell\hh_{\ell,-\ell}\bm\xi^\perp
\\
\jj\sigma_r S_\ell\hh_{\ell,-\ell}\bm\xi^\perp
\end{bmatrix}
\Bigg\}
\\
&
\qquad
+g
\beta
\Bigg\{
\begin{bmatrix}-\jj\sigma_r(\bar P+Q\sigma_r\varSigma)\bar \hh\bm\eta
\\(\bar A+\bar B\sigma_r\varSigma)\bar \hh\bm\eta
\end{bmatrix}
+
\begin{bmatrix}
R_\ell\hh_{\ell,-\ell}\bm\xi^\perp
\\
\jj\sigma_r S_\ell\hh_{\ell,-\ell}\bm\xi^\perp
\end{bmatrix}
\Bigg\}
+2g'\Re(\psi_{\bm\xi,\bm\eta,\omega}^*\beta\varrho)\beta
\begin{bmatrix}
-\jj\sigma_r u\bm\eta
\\ v\bm\eta
\end{bmatrix}.
\nonumber
\end{align}
Equation \eqref{eqn2-0}
follows from the two following equations
\nb{But as solution of \eqref{eqn2-0} does not necessarily come from solutions of both equations. The equivalence is not true without projection into invariant subspaces.}
(we are going to satisfy both):
\begin{align}
\label{eqn2}
&
(\jj\p_t-\omega)
\begin{bmatrix}-\jj\sigma_r(\bar P+Q\sigma_r\varSigma)\bar \hh\bm\eta
\\(\bar A+\bar B\sigma_r\varSigma)\bar \hh\bm\eta
\end{bmatrix}
\\
\nonumber
&
=\begin{bmatrix}0&-\jj\varSigma\\-\jj\varSigma&0\end{bmatrix}
\begin{bmatrix}-\jj\sigma_r(\bar P+Q\sigma_r\varSigma)\bar \hh\bm\eta
\\(\bar A+\bar B\sigma_r\varSigma)\bar \hh\bm\eta
\end{bmatrix}
+g
\beta
\begin{bmatrix}-\jj\sigma_r(\bar P+Q\sigma_r\varSigma)\bar \hh\bm\eta
\\(\bar A+\bar B\sigma_r\varSigma)\bar \hh\bm\eta
\end{bmatrix}
+2g'\Re(\psi_{\bm\xi,\bm\eta,\omega}^*\beta\varrho)\beta
\begin{bmatrix}
-\jj\sigma_r u\bm\eta
\\ v\bm\eta
\end{bmatrix},
\\
\label{eqn2-2}
&
(\jj\p_t-\omega)
\begin{bmatrix}
R_\ell\hh_{\ell,-\ell}\bm\xi^\perp
\\
\jj\sigma_r S_\ell\hh_{\ell,-\ell}\bm\xi^\perp
\end{bmatrix}
=
\begin{bmatrix}
0&-\jj\varSigma\\-\jj\varSigma&0
\end{bmatrix}
\begin{bmatrix}
R_\ell\hh_{\ell,-\ell}\bm\xi^\perp
\\
\jj\sigma_r S_\ell\hh_{\ell,-\ell}\bm\xi^\perp
\end{bmatrix}
+
g\beta
\begin{bmatrix}
R_\ell\hh_{\ell,-\ell}\bm\xi^\perp
\\
\jj\sigma_r S_\ell\hh_{\ell,-\ell}\bm\xi^\perp
\end{bmatrix}.
\end{align}
Both equations
\eqref{eqn1} and \eqref{eqn2}
will follow from the following system:
\begin{align}
\label{sys-0}
\begin{cases}
(\jj\p_t+\omega)(A+B\sigma_r\varSigma)\hh
=-\jj\varSigma\big(
\jj\sigma_r(P+Q\sigma_r\varSigma)\hh
\big)
+
g(A+B\sigma_r\varSigma)\hh
+2g'v\Re(\psi_{\bm\xi,\bm\eta,\omega}^*\beta\varrho),
\\
(\jj\p_t+\omega)
\jj\sigma_r(P+Q\sigma_r\varSigma)\hh
=
-\jj\varSigma(A+B\sigma_r\varSigma)\hh
-g\jj\sigma_r(P+Q\sigma_r\varSigma)\hh
-2\jj\sigma_rg'u\Re(\psi_{\bm\xi,\bm\eta,\omega}^*\beta\varrho).
\end{cases}
\end{align}
Indeed, to arrive at \eqref{eqn1},
one applies both equations
from \eqref{sys-0} to $\bm\xi$;
to arrive at \eqref{eqn2},
one multiplies both equations
by $\sigma_2\bmK$ on the left
(which anticommutes with each of
$\jj$, $\sigma_i$, $\sigma_r$, and $\varSigma$),
by $(\sigma_2\bmK)^{-1}=-\sigma_2\bmK$ on the right,
and then applies the resulting relations to $\bm\eta$.
Taking into account that,
by Lemma~\ref{lemma-sigma},
\begin{align}
\label{tia-1}
\varSigma\circ\sigma_r=
2\p_r+2r^{-1}-\sigma_r\varSigma
=
\p_r+2r^{-1}-r^{-1}\sigma_r\varSigma_\Omega,
\end{align}
we rewrite the first term
in the right-hand side of the first equation in
\eqref{sys-0}
as
\begin{align}
\label{oh}
&
\varSigma\sigma_r(P\hh+Q\sigma_r\varSigma\hh)
=
P'\hh+2r^{-1}P\hh+Q\Delta\hh
+
(-P+Q')
\sigma_r\varSigma
\end{align}
(with summation in $m$ as in \eqref{AY}),
where
for harmonics of degree $\ell\in\N_0$ we have
$\Delta \hh=-r^{-2}\varkappa_{\ell} \hh$;
this allows to rewrite
the first equation from \eqref{sys-0} as
\begin{align}
\label{sys-1}
&(\jj\p_t+\omega)(A+B\sigma_r\varSigma)\hh
\nonumber
\\
&\qquad\qquad
=
P' \hh+2r^{-1}P \hh-\frac{\varkappa_{\ell}}{r^2}Q \hh
+
(-P+Q')\sigma_r\varSigma \hh
+
g(A+B\sigma_r\varSigma)\hh
+2g'v\Re(\psi_{\bm\xi,\bm\eta,\omega}^*\beta\varrho).
\end{align}
Now the evolution equation on $A$
can be obtained by
collecting the terms with $\hh_{\ell,m}$ only
\nb{Strange, we need to project?}
(that is, dropping the terms with $\sigma_r\varSigma \hh_{\ell,m}$)
from \eqref{sys-1}:
\begin{align}\label{ata}
(\jj\p_t+\omega)A \hh
=
P' \hh+2r^{-1}P \hh+Q\Delta \hh
+A \hh g
+2 g' v\Re(\psi_{\bm\xi,\bm\eta,\omega}^*\beta\varrho).
\end{align}
Evolution equation on $B$
is obtained by
collecting the coefficients at $\sigma_r\varSigma \hh_{\ell,m}$
in the first equation in \eqref{sys-0}:
\nb{This part is even stranger}
\begin{align}\label{atb}
(\jj\p_t+\omega)B_{\ell,m}
=
(-P_{\ell,m}+Q_{\ell,m}')
+
B_{\ell,m} g
.
\end{align}
Now we need the following identity:
\[
\varSigma(A\hh+B\sigma_r\varSigma\hh)
=
A'\sigma_r \hh+A\varSigma \hh
+B'\varSigma \hh+B\varSigma\sigma_r\varSigma \hh
=
\Big(A'\sigma_r+A\varSigma
+B'\varSigma
+B
\sigma_r\frac{\varkappa_{\ell}}{r^2}
\Big)\hh.
\]
Above, we took into account that, by
Lemma~\ref{lemma-identities},
$
\varSigma\sigma_r\varSigma \hh
=
(3-n)r^{-1}\varSigma \hh-\sigma_r\Delta \hh$
(where we take $n=3$)
and that
$\Delta \hh=-r^{-2}\varkappa_{\ell} \hh$.
This allows to rewrite
the second equation from \eqref{sys-0}
as
\begin{align}\label{sys-2}
&(\jj\p_t+\omega)
\jj\sigma_r(P+Q\sigma_r\varSigma)\hh
\\
\nonumber
&
\qquad
=
-\jj
\Big(A'\sigma_r+A\varSigma
+B'\varSigma
+
B
\sigma_r\frac{\varkappa_{\ell}}{r^2}
\Big)\hh
-g\jj\sigma_r(P+Q\sigma_r\varSigma)\hh
-2\jj\sigma_rg'u\Re(\psi_{\bm\xi,\bm\eta,\omega}^*\beta\varrho).
\end{align}
Evolution equation on $P$
is obtained by
collecting the terms with $\sigma_r \hh$
from equation \eqref{sys-2}:
\begin{align}\label{atp}
(\jj\p_t+\omega)\jj\sigma_r P \hh
=
-\jj\Big(A'\sigma_r \hh
+B\sigma_r\frac{\varkappa_{\ell}}{r^2}\hh
\Big)
-\jj\sigma_r g P \hh
-2\jj g' u\sigma_r\Re(\psi_{\bm\xi,\bm\eta,\omega}^*\beta\varrho).
\end{align}
Collecting the coefficients
at terms with $\varSigma \hh_{\ell,m}$ in \eqref{sys-2},
we arrive at the evolution equation for $Q_{\ell,m}$:
\begin{align}\label{atq}
(\jj\p_t+\omega)\jj Q_{\ell,m}
=
-\jj\big(A_{\ell,m}
+B_{\ell,m}'
\big)
-\jj g Q_{\ell,m}.
\end{align}

Equation \eqref{eqn2-2}
leads to the following system:
\begin{align}
\label{tfs}
\begin{cases}
(\jj\p_t-\omega)
R_\ell\hh_{\ell,-\ell}\bm\xi^\perp
=
\big(2\p_r+2r^{-1}-\sigma_r\varSigma\big)
S_\ell\hh_{\ell,-\ell}\bm\xi^\perp
+g R_\ell\hh_{\ell,-\ell}\bm\xi^\perp,
\\
(\jj\p_t-\omega)
S_\ell\hh_{\ell,-\ell}\bm\xi^\perp
=
-\sigma_r\varSigma(R_\ell\hh_{\ell,-\ell})\bm\xi^\perp
-g S_\ell\hh_{\ell,-\ell}\bm\xi^\perp.
\end{cases}
\end{align}
Above, we multiplied the second equation
by $-\jj\sigma_r$.

Using~\eqref{tia-1}
and noting that
$\sigma_r\varSigma\hh_{\ell,-\ell}\e_2
=
-r^{-1}\ell\hh_{\ell,-\ell}\e_2$
(cf. Lemma~\ref{lemma-mssm}),
we rewrite \eqref{tfs} as
\begin{align*}
\begin{cases}
(\jj\p_t-\omega)
R_\ell\hh_{\ell,-\ell}\bm\xi^\perp
=
\big(\p_r+2 r^{-1}+\ell r^{-1}\big)
S_\ell\hh_{\ell,-\ell}\bm\xi^\perp
+g R_\ell\hh_{\ell,-\ell}\bm\xi^\perp,
\\
(\jj\p_t-\omega)
S_\ell\hh_{\ell,-\ell}\bm\xi^\perp
=
-(\p_r-\ell r^{-1})R_\ell\hh_{\ell,-\ell}\bm\xi^\perp
-g S_\ell\hh_{\ell,-\ell}\bm\xi^\perp,
\end{cases}
\end{align*}
which will follow from
\begin{align}
\label{atrs}
\begin{cases}
\jj\p_t R_\ell
=
\big(\p_r+2 r^{-1}+\ell r^{-1}\big)S_\ell
+g R_\ell+\omega R_\ell,
\\
\jj\p_t S_\ell
=
-(\p_r-\ell r^{-1}) R_\ell-g S_\ell+\omega S_\ell.
\end{cases}
\end{align}
Collecting equations
\eqref{ata},
\eqref{atb},
\eqref{atp}
(where we drop the common factor $\jj\sigma_r$),
\eqref{atq},
and \eqref{atrs},
we arrive at the following system,
which describes the linearization
at a bi-frequency solitary wave $\psi_{\bm\xi,\bm\eta,\omega}$:
\begin{align}\label{sys-bf-0}
\begin{cases}
\jj\p_t A \hh
=
\big(P'+\frac{2}{r}P-\frac{\varkappa_{\ell}}{r^2}Q
+(g-\omega)A\big)\hh
+2 g' v\Re(\psi_{\bm\xi,\bm\eta,\omega}^*\beta\varrho),
\\
\jj\p_t P \hh
=
-(A'+\frac{\varkappa_{\ell}}{r^2}B+(g-\omega)P)\hh
-2 g' u\Re(\psi_{\bm\xi,\bm\eta,\omega}^*\beta\varrho),
\\
\jj\p_t B_{\ell,m}
=
(-P_{\ell,m}+Q_{\ell,m}')
+
(g-\omega)B_{\ell,m},
\\
\jj\p_t Q_{\ell,m}
=
-A_{\ell,m}-B_{\ell,m}'-\frac{3-n}{r}B_{\ell,m}-(g+\omega)Q_{\ell,m},
\\
\jj\p_t R_\ell
=
\big(\p_r+2+\ell r^{-1}\big)S_\ell
+g R_\ell+\omega R_\ell,
\\
\jj\p_t S_\ell
=
-(\p_r-\ell r^{-1}) R_\ell-g S_\ell+\omega S_\ell.
\end{cases}
\end{align}
We note that
the first two equations
of \eqref{sys-bf-0}
contain summation in $m$,
so that
$A \hh=\sum_{-\ell\le m\le\ell}A_{\ell,m} \hh_{\ell,m}$ and so on
(cf. \eqref{AY});
these equations could be projected onto
the spherical harmonics $\hh_{\ell,m}$
with particular degree $\ell$ but different
order $m$ satisfying $\abs{m}\le\ell$.
We note that
by Lemma~\ref{lemma-same-degree},
the term with $\Re(\psi_{\bm\xi,\bm\eta,\omega}^*\beta\varrho)$
contains linear combination of spherical harmonics
of degree $\ell$,
thus providing the interaction of harmonics
of the same $\ell$ but different $m$.

The system
\eqref{sys-bf-0}
can be written as
$\p_t \Psi=\calA\Psi$,
with $\calA$
the
operator of linearization
at the bi-frequency soltiary wave
$\psi_{\bm\xi,\bm\eta,\omega}$;
this operator is non-$\C$-linear because of $\Re(...)$-term.
This system shows that
$\frakX_\ell\oplus\frakY_\ell$
is the invariant subspace
of the linearization operator,
\nb{It is not clear that \eqref{sys-bf-0} is equivalent to the linearized evolution. Solutions of \eqref{sys-bf-0}
give solutions of the linearized system. My opinion is that the proof should be written this way instead of deduced, which is not the case anyway, from the linearized system. The completion of the equivalence should use uniqueness of Cauchy problem.
}
which we can formulate as follows:
if $\varrho=\rho_1 e^{-\jj\omega t}+\rho_2 e^{\jj\omega t}$
with $(\rho_1,\rho_2)\in\frakX_\ell\oplus\frakY_\ell$,
then
$(\p_t\rho_1-\jj\omega\rho_1,\p_t\rho_2+\jj\omega\rho_2)
\in\frakX_\ell\oplus\frakY_\ell$.
One can also see that the last two equations
for $R_\ell$ and $S_\ell$
in \eqref{sys-bf-0}
are uncoupled from the first four,
so if $R_\ell$ and $S_\ell$ vanish at some moment of time,
then $\p_t R_\ell$ and $\p_t S_\ell$ also vanish.
(That is, if $\varrho=\rho_1 e^{-\jj\omega t}+\rho_2 e^{\jj\omega t}$
with $(\rho_1,\rho_2)\in\frakX_\ell$,
then
$(\p_t\rho_1-\jj\omega\rho_1,\p_t\rho_2+\jj\omega\rho_2)
\in\frakX_\ell$.)
This completes the proof
of Part~\itref{theorem-no-mixing-2}.

The last two equations from
\eqref{sys-bf-0}
can be written as
\[
\jj\p_t\begin{bmatrix}R_\ell\\S_\ell\end{bmatrix}
=\calA_{RS}
\begin{bmatrix}R_\ell\\S_\ell\end{bmatrix},
\qquad
\calA_{RS}:=\begin{bmatrix}
g+\omega&\p_r+2r^{-1}+\ell r^{-1}
\\
-\p_r+\ell r^{-1}&-g+\omega
\end{bmatrix}.
\]
Since the operator $\calA_{RS}$ is symmetric,
it follows that
eigenvalues of $\calA$
such that the corresponding
eigenvectors
have nonzero components $R_\ell$ and $S_\ell$
are purely imaginary.
This proves
Part~\itref{theorem-no-mixing-3}.
\end{proof}

By Theorem~\ref{theorem-no-mixing}~\itref{theorem-no-mixing-3},
to study the spectral stability
(absence of eigenvalues with nonzero real part)
of the linearization operator
$\calA$
corresponding to the system
\eqref{sys-bf-0},
we may assume that
$R_\ell$ and $S_\ell$ are equal to zero,
which we do from now on,
considering perturbations valued in $\frakX_\ell$.
In the matrix form, \eqref{sys-bf-0}
with $R_\ell=S_\ell=0$
reads as follows:
\begin{align}
\label{sys-bf}
\jj\p_t
\begin{bmatrix}A\\P\\B\\Q
\end{bmatrix}
=
\begin{bmatrix}
g-\omega&\p_r+\frac{2}{r}&0&-\frac{\varkappa_{\ell}}{r^2}
\\
-\p_r&-g-\omega&-\frac{\varkappa_{\ell}}{r^2}&0
\\
0&-1&g-\omega&\p_r
\\
-1&0&-\p_r
&-g-\omega
\end{bmatrix}
\begin{bmatrix}A\\P\\B\\Q
\end{bmatrix}
+
2g'
\begin{bmatrix}v\\-u\\0\\0
\end{bmatrix}
\Re(\psi_{\bm\xi,\bm\eta,\omega}^*\beta\varrho)
,
\end{align}
where,
by Theorem~\ref{theorem-no-mixing}~\itref{theorem-no-mixing-1},
the quantity
$2\Re(\psi_{\bm\xi,\bm\eta,\omega}^*\beta\varrho)$
is given by
\begin{align}
\label{where}
2\Re(\psi_{\bm\xi,\bm\eta,\omega}^*\beta\varrho)
&=
2\Re\big\{
v A \hh- u P \hh
+
(v B-u Q)
\big[\bm\xi^*\sigma_r\varSigma \hh\bm\xi
+\bm\eta^*\sigma_r\varSigma \hh\bm\eta\big]
\big\}
\\
\nonumber
&=
v A \hh- u P \hh
+
v \bar A \bar \hh- u \bar P \bar \hh
\\
\nonumber
&
\qquad
+
 (v B-u Q)
 \big[\bm\xi^*\sigma_r\varSigma \hh\bm\xi
 +\bm\eta^*\sigma_r\varSigma \hh\bm\eta\big]
+
 (v\bar B-u\bar Q)
 \big[\bm\xi^*\varSigma\sigma_r\bar \hh\bm\xi
 +\bm\eta^*\varSigma\sigma_r\bar \hh\bm\eta\big].
\end{align}
By Lemma~\ref{lemma-same-degree},
there are $C_{m,k}(\bm\xi)\in\C$
such that
$
\bm\xi^*\sigma_r\varSigma_\Omega \hh_{\ell,k}\bm\xi
=
\sum\sb{k=-\ell}^{\ell}
\hh_{\ell,m}C_{m,k}(\bm\xi)$
for all $-\ell\le k\le\ell$.
Define
\[
C_{m,k}
=C_{m,k}(\bm\xi)+C_{m,k}(\bm\eta),
\qquad
-\ell\le k,\,m\le\ell.
\]
(In the case when $\bm\xi$ and $\bm\eta$
are parallel to $\e_1$,
Lemma~\ref{lemma-mssm} yields
$C_{m,k}=-(\abs{\bm\xi}^2+\abs{\bm\eta}^2)m\delta_{m,k}$,
with $-\ell\le k,\,m\le\ell$).
By Theorem~\ref{theorem-no-mixing}~\itref{theorem-no-mixing-1},
the expression \eqref{where} takes the form
\begin{align*}
2\Re(\psi_{\bm\xi,\bm\eta,\omega}^*\beta\varrho)
&=
\sum_{m=-\ell}^\ell
\Big\{
(v A_{\ell,m}-u P_{\ell,m})\hh_{\ell,m}
+(v\bar A_{\ell,m}-u\bar P_{\ell,m})\hh_{\ell,-m}
\phantom{\sum_{\ell}^\ell}
\\
&
\qquad\qquad
+
\sum\sb{k=-\ell}^\ell
r^{-1}
\big(
\hh_{\ell,m}C_{m,k}(v B_{\ell,k}-u Q_{\ell,k})
+\hh_{\ell,-m}\bar C_{m,k}(v\bar B_{\ell,k}-u\bar Q_{\ell,k})
\big)
\Big\}.
\end{align*}
Collecting
in \eqref{sys-bf}
terms with harmonics $\hh_{\ell,m}$,
the linearized dynamics
can be written as the following system of equations
on $\Psi_{\ell,m}$,
with $-\ell\le m\le\ell$:
\begin{align*}
\jj\p_t
\Psi_{\ell,m}
=
L_0\Psi_{\ell,m}
+
\begin{bmatrix}W&0
\\
0&0
\end{bmatrix}
\Psi_{\ell,m}
+
\begin{bmatrix}W&0
\\
0&0
\end{bmatrix}
\bar\Psi_{\ell,-m}
+
\frac{1}{r}
\begin{bmatrix}0&C_{m,k}W
\\
0&0
\end{bmatrix}
\Psi_{\ell,k}
+
\frac{1}{r}
\begin{bmatrix}0&\bar C_{-m,k} W
\\
0&0
\end{bmatrix}
\bar\Psi_{\ell,k}
.
\end{align*}
Above, the summation in $k$
from $-\ell$ to $\ell$ is assumed.
We rewrite the above equations
in terms of $\C$-linear operators:
\begin{align*}
\p_t
\begin{bmatrix}\Psi_{\ell,m}\\\bar\Psi_{\ell,m}
\end{bmatrix}
=-\jj
\left\{
\Biggr(
\begin{bmatrix}\ L_0\ &0\\[2ex]0&-L_0
\end{bmatrix}
+
\begin{bmatrix}W&0&0&0
\\
0&0&0&0
\\
0&0&-W&0
\\
0&0&0&0
\end{bmatrix}
\Biggr)
\begin{bmatrix}\Psi_{\ell,m}\\\bar\Psi_{\ell,m}\end{bmatrix}
+
\begin{bmatrix}
0&0&W&0
\\
0&0&0&0
\\
-W&0&0&0
\\
0&0&0&0
\end{bmatrix}
\begin{bmatrix}\Psi_{\ell,-m}\\\bar\Psi_{\ell,-m}\end{bmatrix}
\right.
\\[2ex]
\qquad
\left.
+
\sum_{k=-\ell}^{\ell}\frac{1}{r}
\begin{bmatrix}0&C_{m,k} W&0&\bar C_{-m,k} W
\\
0&0&0&0
\\
0&-\bar C_{m,k} W&0&-C_{-m,k} W
\\
0&0&0&0
\end{bmatrix}
\right\}
\begin{bmatrix}\Psi_{\ell,k}\\\bar\Psi_{\ell,k}\end{bmatrix}
,
\qquad
-\ell\le m\le\ell.
\end{align*}
The above equations
can be written as a system,
\[
\p_t\chi
=-\jj\left\{
I_{2\ell+1}\otimes
\left(
\begin{bmatrix}\ L_0\ &\ 0\ \\[2ex]0&-L_0
\end{bmatrix}
+
\begin{bmatrix}W&\frac{1}{r}C\otimes W&W&\frac{1}{r}\bar D\otimes W
\\
0&0&0&0
\\
-W&-\frac{1}{r}\bar C\otimes W&-W&-\frac{1}{r}D\otimes W
\\
0&0&0&0
\end{bmatrix}
\right)
\right\}
\chi,
\]
where
$I_{2\ell+1}$ refers to all orders of the spherical harmonics
of degree $\ell$
and $D_{m,k}=C_{-m,k}$
and where
$\chi=\big(
\Psi_{\ell,-\ell},\bar\Psi_{\ell,-\ell},
\dots,
\Psi_{\ell,m},\bar\Psi_{\ell,m},
\dots,
\Psi_{\ell,\ell},\bar\Psi_{\ell,\ell}
\big)^T$.

\nb{I guess the following is related to the comment "(In the case when $\bm\xi$ and $\bm\eta$
are parallel to $\e_1$,
Lemma~\ref{lemma-mssm} yields
$C_{m,k}=-(\abs{\bm\xi}^2+\abs{\bm\eta}^2)m\delta_{m,k}$,
with $-\ell\le k,\,m\le\ell$).
'' made earlier?}
Considering the case
$\ell\in\N_0$, $-\ell\le m\le\ell$,
$\bm\eta=\nu\e_1$ with $\nu\ge 0$,
$\bm\xi=\sqrt{1+\nu^2}\e_1$,
when $C_{m,k}=-(\abs{\bm\xi}^2+\abs{\bm\eta}^2)m
\delta_{m,k}$,
we obtain the following representation
\nb{Again, to be a representation, it should be equivalent. It seems to me that the writing should be
solutions to \eqref{linearization-lm-bi} provide solutions to the linearized system. Due to uniqueness
in Cauchy problem and uniqueness of the decomposition in Lemma~\ref{lemma-all-1} (or is it Section~\ref{sect-init}?), these are all the solutions.
}
for the linearization operator
in the invariant subspace
corresponding to the spherical harmonics
of degree $\ell$ and orders $\pm m$:
\begin{align}
\label{linearization-lm-bi}
\p_t
\begin{bmatrix}
\Psi_{\ell,m}\\\bar\Psi_{\ell,-m}
\end{bmatrix}
=-\jj
\left\{
\begin{bmatrix}
L_0&0
\\[2ex]
0&-L_0
\end{bmatrix}
+
\begin{bmatrix}
W&-(1+2\nu^2)\frac{m}{r}W&W&(1+2\nu^2)\frac{m}{r}W
\\
0&0&0&0
\\
-W&(1+2\nu^2)\frac{m}{r}W&-W&-(1+2\nu^2)\frac{m}{r}W
\\
0&0&0&0
\end{bmatrix}
\right\}
\begin{bmatrix}
\Psi_{\ell,m}\\\bar\Psi_{\ell,-m}
\end{bmatrix}.
\end{align}

\begin{remark}
\label{remark-better}
Comparing the above expression with \eqref{linearization-lm},
we see that
for particular value of $\nu>1$ and $m\in\Z$
the above operator
corresponds to the operator
in \eqref{linearization-lm}
with the value of magnetic quantum number
$m'\sim(1+2\nu^2)m$.
If
\nb{The ''if'' here is suprising. Why not starting with "A situation was observed for solitary waves
in (2+1)D Soler model \cite{PhysRevLett.116.214101} is as follows"  and in the end concluding that ''In this sense,
bi-frequency solitary waves
may have \emph{better} stability
than one-frequency solitary waves.''? }
at a particular value
of $\omega$
the operator
in \eqref{linearization-lm}
is spectrally stable for $m=0$, unstable for smaller values
of $m\ne 0$
and becomes spectrally stable again for $\abs{m}\ge m_0$
with some $m_0\in\N$
(such a situation was observed for solitary waves
in (2+1)D Soler model \cite{PhysRevLett.116.214101}),
then the linearization at a bi-frequency solitary wave
will be stable if $\nu$ is large enough, so that
$(1+2\nu^2)\ge m_0$.
In this sense,
bi-frequency solitary waves
may have \emph{better} stability
than one-frequency solitary waves.
\end{remark}

\section{Decomposing the initial data
into $\sum_{\ell,m}\frakX_{\ell,m}+\sum_{\ell}\frakY_{\ell}$}
\nb{Is it linked with Lemma~\ref{lemma-all-1}?}
\label{sect-init}

We need to show
that any initial perturbation
of a two-frequency solitary wave
can be decomposed
into pieces which belong to
invariant subspaces
$\frakX_{\ell,m}$
and $\frakY_{\ell}$
introduced in \eqref{both-para} and \eqref{both-ortho}.
We will only do this in the case
when $\bm\xi$ and $\bm\eta$ are parallel,
which is the most interesting one
according to Remark~\ref{remark-two-cases}.

\begin{lemma}
\label{lemma-all-2}
For
$\bm\xi,\,\bm\eta\in\C^2$
both parallel to $\e_1$
and satisfying
$\abs{\bm\xi}^2-\abs{\bm\eta}^2=1$
and for each
$\varrho\in\mathscr{S}(\R^3,\C^4)$
there is a set of functions
$A_{\ell,m}(r),\,B_{\ell,m}(r)$,
$P_{\ell,m}(r),\,Q_{\ell,m}(r)$,
$R_{\ell}(r)$,
$S_{\ell}(r)\in\mathscr{S}(\R^3,\C^4)$,
with
$\ell\in\N_0$,
$m\in\Z$, $\abs{m}\le\ell$,
such that
\begin{align}\label{abr}
\sum_{\ell\in\N_0}
\sum\sb{-\ell\le m\le\ell}
\left(
\begin{bmatrix}
(A_{\ell,m}(r)+B_{\ell,m}(r)\sigma_r\varSigma)\hh_{\ell,m}\bm\xi
\nonumber
\\
\jj\sigma_r(P_{\ell,m}(r)+Q_{\ell,m}(r)\sigma_r\varSigma)\hh_{\ell,m}\bm\xi
\end{bmatrix}
+
\begin{bmatrix}
-\jj
\sigma_r(\bar P_{\ell,m}(r)+\bar Q_{\ell,m}(r)\sigma_r\varSigma)
\hh_{\ell,-m}
\bm\eta
\\
(\bar A_{\ell,m}(r)+\bar B_{\ell,m}(r)\sigma_r\varSigma)\hh_{\ell,-m}
\bm\eta
\end{bmatrix}
\right)
\\
+
\sum_{\ell\in\N_0}
\begin{bmatrix}
R_{\ell}(r)\hh_{\ell,-\ell}\bm\xi^\perp
\\
\jj\sigma_r S_{\ell}(r)
\hh_{\ell,-\ell}\bm\xi^\perp
\end{bmatrix}
=\varrho(x).
\end{align}
\nb{As in Lemma~\ref{lemma-all-1} it should be formulated in  $L^2$.}
\end{lemma}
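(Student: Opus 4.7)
The strategy is to reduce to Lemma~\ref{lemma-all-1}, which handles the one-frequency case ($t=0$). Write $\bm\xi=s\e_1$ and $\bm\eta=t\e_1$ with $s\geq 1$, $t\geq 0$, and $s^2-t^2=1$, so that $\bm\xi^\perp=s\e_2$. The plan is to isolate the $\bm\eta$-piece as an antilinear perturbation of the $\bm\xi$-piece and exploit the finite-dimensional structure that appears once we pass to the spinor spherical harmonic basis.

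Splitting $\varrho=(U,L)^T$ into its upper and lower 2-spinor parts and multiplying the lower equation by $-\jj\sigma_r$ (using $\sigma_r^2=I_2$), the identity \eqref{abr} becomes the coupled system
\begin{align*}
s\,\Psi(A,B,R)-\jj t\,\sigma_r\,\Psi^{\flat}(\bar P,\bar Q,0) &= U,\\
s\,\Psi(P,Q,S)-\jj t\,\sigma_r\,\Psi^{\flat}(\bar A,\bar B,0) &= -\jj\sigma_r L,
\end{align*}
where $\Psi(X,Y,Z):=\sum_{\ell,m}(X_{\ell,m}+Y_{\ell,m}\sigma_r\varSigma)\hh_{\ell,m}\e_1+\sum_{\ell}Z_\ell\hh_{\ell,-\ell}\e_2$ is the upper-spinor map of Lemma~\ref{lemma-all-1} (surjective onto Schwartz 2-spinors), and $\Psi^{\flat}$ is its order-flipped variant obtained by replacing $\hh_{\ell,m}$ by $\hh_{\ell,-m}$. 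The system is $\C$-linear in the $\bm\xi$-piece parameters and $\C$-antilinear in the $\bm\eta$-piece ones.

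The cleanest route is to expand $\varrho$ in the basis of spinor spherical harmonics $\Omega^{\pm}_{j,m_j}$ (Appendix~\ref{sect-spin-orbit}), the joint eigenfunctions of $J^2$ and $J_3$. Using the identities $\sigma_r\varSigma_\Omega=-\bm\sigma\cdot\bm L$ (diagonal on $\Omega^{\pm}_{j,m_j}$) and $\sigma_r\,\Omega^{\pm}_{j,m_j}=-\Omega^{\mp}_{j,m_j}$, the system decouples into independent finite-dimensional blocks indexed by $(j,m_j)$. Each block is an $\R$-linear square system coupling the $\bm\xi$- and $\bm\eta$-piece parameters with their complex conjugates; after elementary row operations the determinant reduces to a power of $s^2-t^2=1$, so the block is invertible. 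The $\frakY_\ell$-parameters $R_\ell$, $S_\ell$ supply the single ``stretched'' basis vectors with $(j,m_j)=(\ell+\tfrac12,-\ell-\tfrac12)$ that are otherwise inaccessible (compare the role of $\hh_{\ell,-\ell}\e_2$ in Lemma~\ref{lemma-all-1}).

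The main obstacle is the cross-$\ell$ coupling introduced by the $\sigma_r$ prefactor in front of the $\bm\eta$-piece upper component (and the $\bm\xi$-piece lower component): in the orbital basis $\{\hh_{\ell,m}e_j\}$, $\sigma_r$ shifts $\ell$ by $\pm 1$, making the linear system tridiagonal in $\ell$ rather than block-diagonal. Passing to the spinor spherical harmonic basis circumvents this obstacle by diagonalizing $\sigma_r$ up to the $\pm$-swap, so that the infinite banded system becomes an infinite family of independent finite blocks; Schwartz regularity of the resulting parameters $A_{\ell,m},\ldots,S_\ell$ is then inherited from the expansion of $\varrho$ in the $\Omega^{\pm}_{j,m_j}$ basis.
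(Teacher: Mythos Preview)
Your route is genuinely different from the paper's and is plausible in outline, but two points deserve flagging.

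First, the block structure is not quite as you state it. The antilinear $\bm\eta$-piece sends the index $m$ to $-m$, so in the $\Omega^{\pm}_{j,m_j}$ basis the $(j,m_j)$-equation couples unknowns with $m=m_j-\tfrac12$ (linearly) to unknowns with $m=\tfrac12-m_j$ (antilinearly). Hence the closed finite blocks are labeled by $j$ together with the pair $\{m_j,\,1-m_j\}$, not by a single $(j,m_j)$; only for $m_j=\tfrac12$ does the block stand alone. This does not break the argument, but the ``determinant equals a power of $s^2-t^2$'' claim is asserted rather than checked, and in fact the relevant nonvanishing factor that emerges is $1-t^2/s^2=1/s^2$ rather than $s^2-t^2$ itself.

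By contrast, the paper avoids the spinor-harmonic machinery altogether. It first reduces to the case where the lower two components of $\varrho$ vanish, then applies the charge-conjugation operator $-\jj\sigma_2\bmK$ to the lower equation \eqref{c2}. Because $-\jj\sigma_2\bmK$ commutes with $\sigma_r\varSigma$ and swaps $\e_1\leftrightarrow\pm\e_2$, this converts the lower constraint into exactly the form of Lemma~\ref{lemma-all-1}, so $P,Q,S$ can be solved in terms of $A,B$; substituting back into the upper equation \eqref{c1} collapses the $\bm\eta$-contribution to $\epsilon^2$ times the $\bm\xi$-contribution, yielding $(1-\epsilon^2)(A+B\sigma_r\varSigma)\hh\,\e_1+R_\ell\hh_{\ell,-\ell}\e_2=\varrho_1$, which is again Lemma~\ref{lemma-all-1}. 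The algebraic trick buys a direct reduction with no need to diagonalize or compute determinants, whereas your basis change is more conceptual and would generalize more readily to situations where no such conjugation symmetry is available.
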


\begin{proof}
We note that it is enough to consider the case
\begin{align}\label{one-two}
\varrho(x)
=
\begin{bmatrix}
\varrho_1(x)
\\
\varrho_2(x)
\end{bmatrix}\in\C^4,
\qquad
\mbox{where $\varrho_2(x)\equiv 0$;}
\end{align}
the case
with $\varrho_1\equiv 0$
is reduced to \eqref{one-two}
by applying
$\begin{bmatrix}0&\sigma_2\\\sigma_2&0\end{bmatrix}$
to both sides of \eqref{abr}.
Also, for our convenience,
dividing \eqref{abr}
by $\abs{\bm\xi}$,
we may assume that
\begin{align}\label{such}
\bm\xi=\e_1,
\qquad
\bm\xi\sp\perp=\e_2,
\qquad
\bm\eta=\epsilon\e_1,
\qquad
0\le\epsilon<1.
\end{align}
We need to solve (we do not indicate explicitly
the dependence on $r$
and the summation in $\ell$ and $m$):
\begin{align}\label{c1}
(A_{\ell,m}+B_{\ell,m}\sigma_r\varSigma)\hh_{\ell,m}\e_1
-
\jj\epsilon\sigma_r
(\bar P_{\ell,m}+\bar Q_{\ell,m}\sigma_r\varSigma)\hh_{\ell,-m}\e_2
+R_{\ell}\hh_{\ell,-\ell}\e_2
=\varrho_1,
\\
\label{c2}
\jj\sigma_r
(P_{\ell,m}+Q_{\ell,m}\sigma_r\varSigma)\hh_{\ell,m}\e_1
+
\epsilon(\bar A_{\ell,m}+\bar B_{\ell,m}\sigma_r\varSigma)
\hh_{\ell,-m}\e_2
+\jj\sigma_r
S_{\ell}\hh_{\ell,-\ell}\e_2=0.
\end{align}
Acting onto the second relation by
the operator $-\jj\sigma_2\bmK$
and taking into account that
this operator commutes with
both $\jj\sigma_r$ and $\sigma_r\varSigma$,
and that
$(-\jj\sigma_2\bmK)\e_1=\e_2$
and
$(-\jj\sigma_2\bmK)\e_2=-\e_1$, we have:
\begin{align}\label{c2-2}
\jj\sigma_r
(\bar P_{\ell,m}+\bar Q_{\ell,m}\sigma_r\varSigma)\hh_{\ell,-m}\e_2
-
\epsilon(A_{\ell,m}+B_{\ell,m}\sigma_r\varSigma)
\hh_{\ell,m}\e_1
-\jj\sigma_r
\bar S_{\ell}\hh_{\ell,\ell}\e_1=0.
\end{align}
We note that for each set
$A_{\ell,m}$ and $B_{\ell,m}$
there is a choice of
$P_{\ell,m}$, $Q_{\ell,m}$, and $S_{\ell}$
such that \eqref{c2-2} is satisfied
(cf. Lemma~\ref{lemma-all-1}).
The inspection shows that $S_{\ell}=0$.
Indeed, multiplying \eqref{c2-2} by $-\jj\sigma_r$
gives:
\begin{align}\label{c2-3}
(\bar P_{\ell,m}+\bar Q_{\ell,m}\sigma_r\varSigma)\hh_{\ell,-m}\e_2
+\jj\epsilon\sigma_r
(A_{\ell,m}+B_{\ell,m}\sigma_r\varSigma)
\hh_{\ell,m}\e_1
-
\bar S_{\ell}\hh_{\ell,\ell}\e_1=0;
\end{align}
\nb{I cannot understand the following :(}
in the first term
in \eqref{c2-3}, the contribution into the first
component ($\e_1$) contains
$e^{-\jj\phi}\sin\theta\hh_{\ell,-m}$,
so the magnetic quantum number
$-m\le\ell$
(the inequality due to the presence
of $\hh_{\ell,-m}$)
is always less than azimuthal one, $\ell+1$;
the second term contributes
$\cos\theta\hh_{\ell,-m}$,
so $-m\le\ell$
and the magnetic quantum number
$m-1\le\ell-1$
is always less than azimuthal one, $\ell+1$.
At the same time, the magnetic and azimuthal quantum
numbers for the last term are the same.
Using \eqref{c2-2}
to substitute the expression for
$\jj\sigma_r(\bar P_{\ell,m}-\bar Q_{\ell,m}\sigma_r\varSigma)\hh_{\ell,-m}\e_2$
into \eqref{c1} yields:
\begin{align*}
(A_{\ell,m}+B_{\ell,m}\sigma_r\varSigma)\hh_{\ell,m}\e_1
-
\epsilon^2(A_{\ell,m}+B_{\ell,m}\sigma_r\varSigma)
\hh_{\ell,m}\e_1
+R_{\ell}\hh_{\ell,-\ell}\e_2
=\varrho_1,
\end{align*}
which we rewrite as
$
(1-\epsilon^2)
(A_{\ell,m}+B_{\ell,m}\sigma_r\varSigma)\hh_{\ell,m}\e_1
+R_{\ell}\hh_{\ell,-\ell}\e_2
=\varrho_1$,
or\nb{$k_{\ell,m}$ not defined here!}
\begin{align*}
(1-\epsilon^2)
\Big[
A_{\ell,m}\hh_{\ell,m}\e_1
+
B_{\ell,m}
\Big(
-\frac{m}{r}\hh_{\ell,m}\e_1
+\frac{k_{\ell,m}}{r}\hh_{\ell,m+1}\e_2
\Big)
\Big]
+
R_{\ell}\hh_{\ell,-\ell}\e_2
=\varrho_1,
\end{align*}
with some coefficients $k_{\ell,m}$.
Writing
$\varrho_1=
c_{\ell,m}\hh_{\ell,m}\e_1
+c'_{\ell,m}\hh_{\ell,m}\e_2$,
we arrive at the system
\begin{align*}
\begin{cases}
(1-\epsilon^2)
\Big(A_{\ell,m}-B_{\ell,m}\frac{m}{r}\Big)
=c_{\ell,m},
\\[1ex]
(1-\epsilon^2)B_{\ell,m-1}\frac{k_{\ell,m-1}}{r}
+\delta_{-m,\ell}R_{\ell}=c'_{\ell,m},
\end{cases}
\qquad
\ell\in\N_0,
\quad
m\in\Z,
\quad
\abs{m}\le\ell.
\end{align*}
We note that
$k_{\ell,m-1}=0$ if $m=-\ell$;
this is exactly when the second term
in the right-hand side of the second equation
is nonzero,
so the above system always has a
\ac{unique??
We can only specify
$A_{\ell,\ell}-B_{\ell,\ell}\ell/r$,
not $A_{\ell,\ell}$, $B_{\ell,\ell}\ell$
separately...
\nb{$A_{\ell,-\ell}$, $B_{\ell,-\ell}$, no?}
}
solution.
\nb{How is $R_\ell$ determined?}
\end{proof}

\appendix

\section{Appendix: The spherical harmonics}
\label{sect-harmonics}

Let us remind the construction
of spherical harmonics
from \cite{van-der-waerden-1932}.
Let $Q(x_1,\dots,x_n)$ be a homogeneous harmonic polynomial
of degree $\ell$:
\[
0=\Delta Q=\Delta (r^\ell \hh(\Omega)),
\]
where $\hh$ is a function on $\mathbb{S}^{n-1}$.
One has
$\Delta u=r^{-(n-1)}\p\sb r(r^{n-1}\p\sb r u)
+\frac{1}{r^2}\Delta\sb{\Omega}u$,
where $\Delta\sb{\Omega}$ is the Laplace--Beltrami operator on $\mathbb{S}^{n-1}$,
so
\[
0=\Delta Q=\Delta (r^\ell \hh(\Omega))
=r^{-(n-1)}\p\sb r(r^{n-1}\p\sb r (r^\ell))\hh(\Omega)
+r^{-2}\Delta\sb{\Omega}(r^\ell \hh(\Omega)).
\]
It follows that
$
\sigma(\Delta\sb{\Omega})
=\{-\varkappa_{\ell}\sothat \ell\in\N_0\},
$
with $\varkappa_{\ell}=\ell(\ell+n-2)$.
For the completeness, we recall that the Legendre polynomials
can be defined by
(cf. \cite[\S 4.6.4]{thaller1992dirac})
\[
P_{\ell}(w)
=\frac{1}{2^\ell \ell!}
\Big(\frac{d}{dw}\Big)^\ell (w^2-1)^\ell,
\qquad
\ell\in\N_0,
\]
the associated Legendre polynomials
are defined for $0\le m\le\ell$
as in \cite[Chapter 14]{Schiff1949} by
\[
P_{\ell,m}(w)
=
(1-w^2)^{m/2}\Big(\frac{d}{dw}\Big)^{m}P_{\ell}(w),
\qquad
m\in\N_0,
\quad
0\le m\le \ell,
\]
and then the spherical harmonics are defined by
\[
\hh_{\ell,m}
(\theta,\phi)
=
\sqrt{\frac{2\ell+1}{4\pi}\frac{(\ell-m)!}{(\ell+m)!}}
e^{\jj m\phi}P_{\ell,m}(\cos\theta)
\quad \mbox{for}\quad m\ge 0,
\qquad
\hh_{\ell,-m}(\theta,\phi)
=
\overline{\hh_{\ell,m}(\theta,\phi)},
\]
where
$\phi\in[0,2\pi)$ and $\theta\in[0,\pi]$.
We note that this differs from the definition adopted
in \cite[\S 4.6.4]{thaller1992dirac},
where
\[
P_{\ell}^m=(-1)^m P_{\ell,m}
\quad
\mbox{and}
\quad
Y_{\ell}^m
=
(-1)^m\hh_{\ell,m}
\quad \mbox{for}\quad m\ge 0,
\qquad
Y_{\ell}^{-m}
=(-1)^m \overline{Y_{\ell}^m}.
\]

\section{Appendix:
Properties of the spin-orbit operator}
\label{sect-spin-orbit}

For generality, we consider an arbitrary spatial dimension
$x\in\R^n$, $n\in\N$.
The Dirac matrices are self-adjoint
and chosen so that the Dirac operator
$D_m=-\jj\bm\upalpha\cdot\nabla+\upbeta m$,
with $m>0$,
satisfies
\[
D_m^2=(-\Delta^2+m^2)I_N,
\]
with $I\sb{N}$ is the unit matrix of size $N$.
It then follows that $\upalpha^j$ and $\upbeta$
are of size $N=2^{[(n+1)/2]}$ (or its integral factor)
and satisfy the standard relations
\begin{align}\label{anti-alpha-general}
\upalpha_i\upalpha_j
+\upalpha_j\upalpha_i=2\delta_{i j} I_N,
\qquad
\upalpha_i\upbeta
+\upbeta\upalpha_i=0,
\qquad
1\le i,\,j\le n;
\qquad
\upbeta^2=I_N.
\end{align}
The Dirac conjugate of $\psi\in\C^N$
is denoted by
$\bar\psi=\psi^*\upbeta$,
with $\psi^*$ the hermitian conjugate of $\psi$.
The Dirac matrices could be taken in the form
\begin{equation}\label{upalpha-general}
\upalpha^{j}
=
\begin{bmatrix} 0&\upsigma_j^*
\\ \upsigma_j&0\end{bmatrix},
\qquad
\upbeta
=
\begin{bmatrix} I\sb{N/2}&0 \\ 0&-I\sb{N/2}\end{bmatrix},
\end{equation}
with
$\upsigma_j$ the ``generalized'' 
Pauli matrices,
not necessarily selfadjoint,
which satisfy the relations
\begin{align}\label{anti-sigma-general}
\upsigma_i\upsigma_j^*+\upsigma_j\upsigma_i^*
=2\delta_{i j}I_{N/2},
\qquad
\upsigma_i^*\upsigma_j+\upsigma_j^*\upsigma_i
=2\delta_{i j}I_{N/2},
\qquad
1\le i,\,j\le n;
\end{align}
let us mention that
each of the above relations implies the other one.
For $x\in\R^n\setminus\{0\}$, $r=\abs{x}>0$, let
\[
\upsigma_r=r^{-1}x\cdot\bm\upsigma,
\qquad
\upalpha_r=r^{-1}x\cdot\bm\upalpha.
\]
We introduce
\[
\varSigma=\bm\upsigma\cdot\nabla:\;
L^2(\R^n,\C^{N/2})\to L^2(\R^n,\C^{N/2}),
\qquad
\dom(\varSigma)=H^1(\R^n,\C^{N/2}),
\]
and define the angular part of $\varSigma$ by
\[
\varSigma_\Omega:\,L^2(\mathbb{S}^{n-1},\C^{N/2})
\to L^2(\mathbb{S}^{n-1},\C^{N/2}),
\qquad
\dom(\varSigma_\Omega)=H^1(\mathbb{S}^{n-1},\C^{N/2})
\]
by the relation
\[
\varSigma=\upsigma_r\p_r+r^{-1}\varSigma_\Omega.
\]

\begin{remark}
In three spatial dimensions,
as ``generalized Pauli matrices'' $\upsigma_i$,
$1\le i\le 3$,
one uses the standard Pauli matrices
$\sigma\sb{1}=\begin{bmatrix} 0&1 \\ 1&0\end{bmatrix}$,
$\sigma\sb{2}=\begin{bmatrix} 0&-\jj \\ \jj&0\end{bmatrix}$,
$\sigma\sb{3}=\begin{bmatrix} 1&0 \\ 0&-1\end{bmatrix}$;
in four spatial dimensions,
one can additionally take $\upsigma_4=\jj I_2$,
$\upalpha^4=\begin{bmatrix}0&-\jj I_2\\ \jj I_2&0\end{bmatrix}
=-\jj\upbeta \gamma_5$,
where $\gamma_5=\begin{bmatrix}0&I_2\\I_2&0\end{bmatrix}$.
In spatial dimensions $n\ge 5$,
one proceeds by induction,
constructing Dirac matrices
of size $N\times N$, $N=2^{[(n+1)/2]}$
via \eqref{upalpha}
with
$\upsigma_j$, $1\le j\le n-1$,
the $N/2\times N/2$ Dirac matrices
$\upalpha^j$, $1\le j\le n-2$ and $\upbeta$
of size $N/2$ and with $\upsigma_n=\jj I_{N/2}$.
A particular choice of the Dirac matrices
is irrelevant
\nb{Not exactly!}
in view of the Dirac--Pauli theorem;
see e.g. \cite[Theorem VIII.7]{opus}.
\end{remark}

The standard references for the Dirac equation are
\cite{bethe1977quantum,bjorken1965relativistic,thaller1992dirac};
for the higher dimensional case,
see e.g. \cite[Chapter VIII]{opus}.

\begin{lemma}\label{lemma-sigma}
Let $n\in\N$
Let $\upalpha_i$, $1\le i\le n$,
be the self-adjoint Dirac matrices
of size $N=2^{[(n+1)/2]}$ satisfying
\eqref{anti-alpha-general},
and let $\upsigma_i$, $1\le i\le n$,
be the ``generalized'' Pauli matrices
of size $N/2$
satisfying
\eqref{anti-sigma-general}.
One has:
\begin{align}
\label{lemma-sigma-2}
&
\big\{\upalpha_r,\bm\upalpha\cdot\nabla\big\}
=\Big(2\p_r+\frac{n-1}{r}\Big)I_N,
\\
\label{lemma-sigma-3}
&
\big\{
\upalpha\sb r,\bm\upalpha\cdot\nabla-\upalpha_r\p_r
\big\}
=(n-1)I_N,
\\
\label{lemma-sigma-1a}
&
\upsigma_r\,\bm\upsigma^*\cdot\nabla
+
(\bm\upsigma\cdot\nabla)\circ\upsigma_r^*
=
\Big(2\p_r+\frac{n-1}{r}\Big)I_{N/2},
\\
\label{lemma-sigma-1b}
&
\upsigma_r^*\,\bm\upsigma\cdot\nabla
+
(\bm\upsigma^*\cdot\nabla)\circ\upsigma_r
=\Big(2\p_r+\frac{n-1}{r}\Big)I_{N/2}.
\end{align}
\end{lemma}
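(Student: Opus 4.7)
The plan is to derive all four identities by direct computation in Cartesian coordinates, using the Clifford relations \eqref{anti-alpha-general} and \eqref{anti-sigma-general} together with the Leibniz rule applied to the coefficient $r^{-1}x_i$ that appears in $\upalpha_r=r^{-1}x_i\upalpha^i$ and $\upsigma_r=r^{-1}x_i\upsigma_i$.

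First I would prove \eqref{lemma-sigma-2}. Writing $\bm\upalpha\cdot\nabla=\upalpha^j\p_j$ and expanding $\{\upalpha_r,\bm\upalpha\cdot\nabla\}$, the only $x$-derivative that acts nontrivially falls on $r^{-1}x_i$, producing $\delta_{ij}r^{-1}-x_ix_jr^{-3}$. Three mechanisms then combine to yield the right-hand side: symmetrization via $\upalpha^i\upalpha^j+\upalpha^j\upalpha^i=2\delta_{ij}I_N$ turns the transport term into $2x_j\p_j r^{-1}I_N=2\p_r I_N$; the trace identity $\sum_j(\upalpha^j)^2=nI_N$ contributes $n/r$; and the symmetrized quadratic contraction $x_ix_j\upalpha^i\upalpha^j=\frac12 x_ix_j\{\upalpha^i,\upalpha^j\}=r^2 I_N$ cancels against the $-r^{-3}$ prefactor to give the subtracted $1/r$, producing $(2\p_r+(n-1)/r)I_N$.

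Identity \eqref{lemma-sigma-3} then drops out by subtracting $\{\upalpha_r,\upalpha_r\p_r\}$ from \eqref{lemma-sigma-2}. Since $\upalpha_r=\hat x\cdot\bm\upalpha$ depends only on angular variables, one has $[\p_r,\upalpha_r]=0$; combined with $\upalpha_r^2=I_N$ (immediate from the symmetrized quadratic contraction already used), this gives $\{\upalpha_r,\upalpha_r\p_r\}=2\upalpha_r^2\p_r=2\p_r I_N$, so the remainder captures only the angular contribution to the anticommutator.

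For \eqref{lemma-sigma-1a} and \eqref{lemma-sigma-1b}, I would repeat the argument for \eqref{lemma-sigma-2} with $\upsigma_r$ and $\upsigma_r^*$ in place of $\upalpha_r$, being careful that each application of the Leibniz rule and each symmetrization matches up one starred with one unstarred factor. The three ingredients above are replaced by their analogues from \eqref{anti-sigma-general}: $\upsigma_i\upsigma_j^*+\upsigma_j\upsigma_i^*=2\delta_{ij}I_{N/2}$, its diagonal consequence $\sum_j\upsigma_j\upsigma_j^*=nI_{N/2}$, and the symmetrized quadratic contraction $x_ix_j\upsigma_i\upsigma_j^*=r^2 I_{N/2}$; the identity \eqref{lemma-sigma-1b} follows by the same calculation using the companion relation $\upsigma_i^*\upsigma_j+\upsigma_j^*\upsigma_i=2\delta_{ij}I_{N/2}$. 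The only real obstacle I anticipate is the bookkeeping of adjoints in the non-Hermitian case: one must place $\upsigma$ and $\upsigma^*$ so that the anticommutator identities in \eqref{anti-sigma-general} apply to the correct pair at each step. Conceptually the lemma is just Clifford algebra packaged as a first-order differential identity.
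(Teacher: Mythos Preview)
Your proposal is correct and follows essentially the same approach as the paper: a direct Cartesian computation using the Clifford relations and the Leibniz rule applied to $\p_j(r^{-1}x_i)=\delta_{ij}r^{-1}-x_ix_jr^{-3}$, then reducing the remaining identities to the one already computed. The only cosmetic difference is ordering: the paper carries out the explicit computation for \eqref{lemma-sigma-1a} first and then remarks that \eqref{lemma-sigma-1b}, \eqref{lemma-sigma-2}, and \eqref{lemma-sigma-3} follow by interchanging $\upsigma\leftrightarrow\upsigma^*$, replacing $\upsigma,\upsigma^*$ by $\upalpha$, and subtracting $\{\upalpha_r,\upalpha_r\p_r\}=2\p_r I_N$, respectively---exactly the reductions you describe, just starting from a different identity.
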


\begin{proof}
Using the relations
\eqref{anti-sigma-general},
one arrives at \eqref{lemma-sigma-1a}:
\[
\frac{x^i}{r}\upsigma_i\upsigma_j^*\p_j
+
\upsigma_j\p_j\circ \frac{x^i}{r}\upsigma_i^*
=
\upsigma_i\upsigma_j^*\frac{x^i}{r}\p_j
+
\upsigma_j\upsigma_i^*\frac{x^i}{r}\p_j
+
\upsigma_j\upsigma_i^*\frac{\delta_{ij}}{r}
-
\upsigma_j\upsigma_i^*\frac{x^i x^j}{r^3}
=2\p_r+\frac{n}{r}I_{N/2}-\frac{1}{r}I_{N/2}.
\]
The relation \eqref{lemma-sigma-1b}
is obtained by interchanging $\upsigma_j$ and $\upsigma_j^*$.
The identity \eqref{lemma-sigma-2}
is proved similarly
to \eqref{lemma-sigma-1a}
with $\upalpha$ in place of both $\upsigma_j$ and $\upsigma_j^*$.
The identity \eqref{lemma-sigma-3}
is a consequence of \eqref{lemma-sigma-2}.
\end{proof}

\begin{lemma}\label{lemma-identities}
There is the following identity:
$\ds
\varSigma
\upsigma_r^*
\varSigma
=
2\varSigma\circ\p_r+\frac{2}{r}\upsigma_r\p_r
+\frac{n-3}{r}\varSigma-\upsigma_r\Delta
$, $\ \ r>0$.
\end{lemma}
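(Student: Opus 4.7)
The plan is to reduce $\varSigma\,\upsigma_r^*\,\varSigma$ to the right-hand side in three algebraic moves: first push $\upsigma_r^*$ through the left-most $\varSigma$ via Lemma~\ref{lemma-sigma}, then recognize that the resulting pairing of $\bm\upsigma^*$ and $\bm\upsigma$ collapses to the scalar Laplacian, and finally convert $\p_r\,\varSigma$ into $\varSigma\,\p_r$ plus a correction that absorbs the extra angular terms.

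For the first move, identity \eqref{lemma-sigma-1a} rearranges to
\[
\varSigma\circ\upsigma_r^*
=
-\,\upsigma_r\,(\bm\upsigma^*\!\cdot\!\nabla)
+\Bigl(2\p_r+\tfrac{n-1}{r}\Bigr)I_{N/2},
\]
so composing on the right with $\varSigma$ gives
\[
\varSigma\,\upsigma_r^*\,\varSigma
=
-\,\upsigma_r\,(\bm\upsigma^*\!\cdot\!\nabla)(\bm\upsigma\!\cdot\!\nabla)
+\Bigl(2\p_r+\tfrac{n-1}{r}\Bigr)\varSigma.
\]
For the second move, I symmetrize $\upsigma_i^*\upsigma_j\p_i\p_j$ in the pair $(i,j)$ using \eqref{anti-sigma-general}: since $\p_i\p_j$ is symmetric,
\[
\upsigma_i^*\upsigma_j\,\p_i\p_j
=\tfrac12(\upsigma_i^*\upsigma_j+\upsigma_j^*\upsigma_i)\p_i\p_j
=\delta_{ij}\,\p_i\p_j=\Delta,
\]
so $(\bm\upsigma^*\!\cdot\!\nabla)(\bm\upsigma\!\cdot\!\nabla)=\Delta I_{N/2}$.

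For the third move, I need the commutator of $\p_r=\frac{x^i}{r}\p_i$ with $\varSigma=\upsigma_j\p_j$. A short computation using $\p_j(x^i/r)=\delta_{ij}/r-x^ix^j/r^3$ and $\upsigma_j x^j=r\,\upsigma_r$ yields
\[
\varSigma\,\p_r - \p_r\,\varSigma
= \tfrac{1}{r}\varSigma - \tfrac{1}{r}\upsigma_r\,\p_r.
\]
Substituting $2\p_r\,\varSigma=2\varSigma\,\p_r-\tfrac{2}{r}\varSigma+\tfrac{2}{r}\upsigma_r\,\p_r$ into the expression for $\varSigma\,\upsigma_r^*\,\varSigma$ produces exactly
\[
\varSigma\,\upsigma_r^*\,\varSigma
=-\upsigma_r\,\Delta
+2\,\varSigma\circ\p_r
+\tfrac{2}{r}\,\upsigma_r\,\p_r
+\tfrac{n-3}{r}\,\varSigma,
\]
where the coefficient $n-3$ comes from combining $n-1$ with the $-2$ picked up during the commutator step.

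Each step is essentially a bookkeeping exercise, so no single step poses a real obstacle; the most error-prone point is the third, where one must carefully distinguish $\varSigma\,\p_r$ from $\p_r\,\varSigma$ and track the $1/r$ corrections, since this is where the expected $(n-1)/r$ shifts into the stated $(n-3)/r$.
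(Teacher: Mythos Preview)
Your proof is correct and follows essentially the same approach as the paper: rearrange $\varSigma\circ\upsigma_r^*$ via Lemma~\ref{lemma-sigma}, identify $(\bm\upsigma^*\!\cdot\!\nabla)(\bm\upsigma\!\cdot\!\nabla)=\Delta$, and compute the commutator $[\p_r,\varSigma]$ to convert $\p_r\varSigma$ into $\varSigma\p_r$ plus lower-order terms. The only cosmetic difference is that the paper computes the commutator first (using the radial-angular splitting $\varSigma=\upsigma_r\p_r+r^{-1}\varSigma_\Omega$ rather than Cartesian coordinates) and then applies Lemma~\ref{lemma-sigma}, whereas you reverse the order.
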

\begin{proof}
For $r=\abs{x}>0$, one has:
\[
\p_r\circ\varSigma
=\p_r\circ \Big(\upsigma_r\p_r+\frac{\varSigma_\Omega}{r}\Big)
=\Big(\upsigma_r\p_r+\frac{\varSigma_\Omega}{r}\Big)
\circ\p_r
-\frac{\varSigma_\Omega}{r^2}
=\varSigma\circ\p_r
-\frac{\varSigma}{r}+\frac{\upsigma_r}{r}\p_r.
\]
Therefore,
using Lemma~\ref{lemma-sigma},
\[
\varSigma
\upsigma_r^*
\varSigma
=
\Big(2\p_r+\frac{n-1}{r}I-\upsigma_r
(\bm\upsigma^*\cdot\nabla)
\Big)
\varSigma
=
2\varSigma\circ\p_r
-\frac{2}{r}\varSigma
+\frac{2}{r}\upsigma_r\p_r
+
\frac{n-1}{r}\varSigma
-\upsigma_r\Delta.
\qedhere
\]
\end{proof}

Let us define the symmetric operator
\[
p_r=-\jj r^{(1-n)/2}\circ\p\sb r \circ r^{(n-1)/2},
\qquad
D(p_r)=C\sp\infty\sb{\mathrm{comp}}(\R^n\setminus\{0\}),
\]
the matrix
$\upalpha\sb r=r^{-1}\bm\upalpha\cdot x$,
$x\in\R^n\setminus\{0\}$,
and
\begin{align}\label{def-g-s}
G=
-\sum\sb{1\le j<k\le n}
\jj \upalpha^j\upalpha^k(x_j p_k-x_k p_j),
\qquad
S=\frac{n-1}{2}+G.
\end{align}
We note that if $n=1$,
then $G=S=0$; from now on we assume that $n\ge 2$.
One has:
\begin{align}\label{g-is-ss}
-\jj\bm\upalpha\cdot\nabla
=
\bm\upalpha\cdot\bm{p}
=\upalpha_r\Big(p_r+\frac{\jj}{r}S\Big)
=-\jj\upalpha_r\Big(\p\sb r+\frac{n-1}{2r}-\frac{1}{r}S\Big)
=-\jj\upalpha_r\Big(\p\sb r-\frac{1}{r}G\Big).
\end{align}
The operator
\begin{equation}\label{def-s}
S
=-\jj r(\upalpha_r\,\bm\upalpha\cdot\bm{p}-p_r)
=r\p_r-r\upalpha_r\,\bm\upalpha\cdot\nabla
+\frac{n-1}{2}
\end{equation}
is called the spin-orbit operator
(we follow \cite{kalf1999note}).

\begin{remark}
\label{remark-ksl}
In three spatial dimensions,
Dirac used the spin-orbit operator
$K=\upbeta(2\bm{S}\cdot\bm{L}+1)=\upbeta\big(\bm{J}^2-\bm{L}^2+\frac 1 4\big)$
with
$\bm{S}=-\frac{\jj}{4}\bm\upalpha\wedge\bm\upalpha
=\frac{1}{2}\begin{bmatrix}\bm\sigma&0\\0&\bm\sigma
\end{bmatrix}
$
spin angular momentum operator,
$\bm{L}=x\wedge (-\jj\nabla)$ the orbital angular momentum operator,
and $\bm{J}=\bm{L}+\bm{S}$ total angular momentum
(see \cite[\S 6.4.3]{thaller1992dirac});
in this case, the operators $S$ from \eqref{def-g-s} and $K$
are related by $S=\upbeta K$.
\end{remark}


\begin{lemma}\label{lemma-s}
The operator $S$ on $L^2(\mathbb{S}^{n-1},\C^N)$
with domain $\dom(S)=H^1(\mathbb{S}^{n-1},\C^N)$
is selfadjoint,
anticommutes with $\upalpha_r$,
and commutes with $\Delta_\Omega$.
\end{lemma}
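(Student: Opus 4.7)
The plan is to verify the three assertions in sequence, exploiting both the angular form $S=\frac{n-1}{2}+G$ from \eqref{def-g-s} and the radial form \eqref{def-s}.

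For self-adjointness, I would work with the angular form. Each scalar operator $L_{jk}:=-\jj(x_j\p_k-x_k\p_j)$ is tangential to spheres and, by standard elliptic theory, extends to an essentially self-adjoint operator on $C^\infty(\mathbb{S}^{n-1})$ with self-adjoint closure on $H^1(\mathbb{S}^{n-1})$. Its matrix coefficient $-\jj\upalpha^j\upalpha^k$ is Hermitian for $j<k$ (since $(\upalpha^j\upalpha^k)^{*}=-\upalpha^j\upalpha^k$ by \eqref{anti-alpha-general}) and commutes with the scalar factor $L_{jk}$, so each of the $\binom{n}{2}$ products is self-adjoint. Summing and adding the scalar shift $\frac{n-1}{2}$ then gives $S$ self-adjoint on $H^1(\mathbb{S}^{n-1},\C^N)$.

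For the anticommutation with $\upalpha_r$, I would use the radial form. Since $\upalpha_r=r^{-1}x\cdot\bm\upalpha$ depends only on angles, $[\p_r,\upalpha_r]=0$ and therefore $\{r\p_r,\upalpha_r\}=2r\upalpha_r\p_r$. Lemma~\ref{lemma-sigma} supplies $\{\upalpha_r,\bm\upalpha\cdot\nabla\}=2\p_r+\frac{n-1}{r}$, which, combined with $\upalpha_r^2=I_N$, yields after a short computation
\[
\{-r\upalpha_r\bm\upalpha\cdot\nabla,\,\upalpha_r\}=-2r\upalpha_r\p_r-(n-1)\upalpha_r.
\]
Summing all contributions, the $2r\upalpha_r\p_r$ terms cancel and the $(n-1)\upalpha_r$ term is absorbed by the shift $\frac{n-1}{2}$, giving $\{S,\upalpha_r\}=0$.

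For the commutation with $\Delta_\Omega$, the cleanest strategy is to derive the operator identity
\[
\Delta_\Omega=\Big(\tfrac{n-2}{2}\Big)^2-\big(S-\tfrac{1}{2}\big)^2,
\]
already invoked in the proof of Lemma~\ref{lemma-same-degree}; this will force $[S,\Delta_\Omega]=0$ automatically, since $\Delta_\Omega$ becomes a polynomial in $S$. To produce it, I would start from $D_0=-\jj\upalpha_r(\p_r-r^{-1}G)$ given by \eqref{g-is-ss}. The previous step implies $\upalpha_r G\upalpha_r=-G-(n-1)$, which together with $\upalpha_r^2=I_N$ and $[G,\p_r]=0$ (since $G$ is tangential) lets me expand $D_0^2$ in spherical form; matching coefficients with $D_0^2=-\Delta=-\p_r^2-(n-1)r^{-1}\p_r-r^{-2}\Delta_\Omega$ yields $\Delta_\Omega=-G(G+n-2)$, which rearranges to the displayed quadratic identity. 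The hard part will be the last bookkeeping — tracking how $G$, $\upalpha_r$ and $\p_r$ commute past one another when squaring — but no new ingredient beyond the anticommutation proved in the previous step is required.
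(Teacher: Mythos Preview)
Your arguments for the anticommutation with $\upalpha_r$ and for the commutation with $\Delta_\Omega$ (via the quadratic identity $\Delta_\Omega=(\tfrac{n-2}{2})^2-(S-\tfrac12)^2$) match the paper's proof essentially step for step; the paper likewise squares $D_0=-\jj\upalpha_r\big(\p_r+\tfrac{n-1}{2r}-\tfrac{S}{r}\big)$, using the anticommutation just established, and reads off the identity from the $r^{-2}$ coefficient of $-\Delta$.

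The only difference is in the self-adjointness argument, which the paper places last. The paper works from the radial decomposition $\bm\upalpha\cdot\bm p=\upalpha_r p_r+\tfrac{\jj}{r}\upalpha_r S$: since $\bm\upalpha\cdot\bm p$ and $\upalpha_r p_r$ are both symmetric on the tensor-product domain $H^1_0(\R_+)\otimes H^1(\mathbb{S}^{n-1})\otimes\C^N$, so is $\tfrac{\jj}{r}\upalpha_r S$, hence $\jj\upalpha_r S$ is symmetric on $H^1(\mathbb{S}^{n-1},\C^N)$, and the anticommutation $\{S,\upalpha_r\}=0$ transfers this to $S$. Your direct route through the angular form $G=\sum_{j<k}(-\jj\upalpha^j\upalpha^k)L_{jk}$ is more transparent, but note a small gap: the self-adjoint domain of a single rotation generator $L_{jk}$ is strictly larger than $H^1$ (it requires only that one directional derivative to lie in $L^2$), so invoking ``elliptic theory'' for the individual terms is misplaced, and a finite sum of self-adjoint operators is not automatically self-adjoint on any prescribed common core. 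What your argument cleanly yields is that $S$ is \emph{symmetric} on $H^1$; to be fair, that is also the level at which the paper's argument operates.
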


\begin{proof}
By Lemma~\ref{lemma-sigma},
$\big\{\upalpha_r,\bm\upalpha\cdot\nabla\}=\big(2\p_r+\frac{n-1}{r}\big)I_N$,
hence
$\big\{\upalpha_r,\bm\upalpha\cdot\nabla-\upalpha_r\p_r\}=\frac{n-1}{r}I_N$
and so
\[
(n-1)I_N
=
\big\{\upalpha\sb r,r(\bm\upalpha\cdot\nabla-\upalpha_r\p_r)\big\}
=
\big\{\upalpha\sb r,-\upalpha\sb r G\big\}
=
\Big\{
\upalpha\sb r,
\upalpha\sb r\frac{n-1}{2}-\upalpha\sb r S
\Big\};
\]
it follows that
$\upalpha_r$ anticommutes with $\upalpha_r S$ and hence also with $S$.

Since $S$ and $\upalpha_r$ anticommute and $\upalpha_r^2=I_N$,
one has:
\begin{align*}
\p_r^2+\frac{n-1}{r}\p_r+\frac{\Delta_\Omega}{r^2}
=\Big(\upalpha_r\Big(\p_r+\frac{n-1}{2r}-\frac{S}{r}\Big)\Big)^2
=\Big(\p_r+\frac{n-1}{2r}+\frac{S}{r}\Big)
\Big(\p_r+\frac{n-1}{2r}-\frac{S}{r}\Big).
\end{align*}
Collecting the terms with $1/r^2$,
one arrives at the identity
\begin{equation}\label{s-delta}
\Delta_\Omega=
\Big(\frac{n-1}{2}\Big)^2-\frac{n-1}{2}
+S-S^2
=
\Big(\frac{n-2}{2}\Big)^2
-\Big(S-\frac{1}{2}\Big)^2,
\end{equation}
which shows that $S$ commutes with $\Delta_\Omega$.

Since $p_r$ is symmetric,
considering the relation
$\bm\upalpha\cdot\bm{p}=\upalpha_r p_r+\frac{\jj}{r}\upalpha_r S$
(cf. \eqref{g-is-ss}) on functions from
$H^1_0(\R_{+})\otimes H^1(\mathbb{S}^{n-1})\otimes\C^N$
shows that $\jj\upalpha_r S$
is selfadjoint
as an operator on $L^2(\mathbb{S}^{n-1},\C^N)$
with domain $H^1(\mathbb{S}^{n-1},\C^N)$,
and hence so is $S$
(due to the relation
$-\jj S^*\upalpha_r^*=(\jj\upalpha_r S)^*=\jj\upalpha_r S
=-\jj S\upalpha_r$).
\end{proof}

\bibliographystyle{alpha}
\bibliography{bibcomech}
\end{document}